\newlist{casesp}{enumerate}{2}
\setlist[casesp]{align=left, listparindent=\parindent, parsep =
  \parskip, font=\normalfont\bfseries, leftmargin=0pt,
  labelwidth=0pt, itemindent=.4em, labelsep=.4em, partopsep=0pt}
\setlist[casesp,1]{label=Case~\arabic*:,ref=\arabic*}
\setlist[casesp,2]{label=Case~\thecasespi\alph*:,ref=\thecasespi\alph*}
\declaretheorem{theorem}
\declaretheorem[sibling=theorem]{question}
\declaretheorem[sibling=theorem]{corollary}
\declaretheorem[sibling=theorem, style=definition]{definition}
\declaretheorem[style=remark, numbered=no]{remark}
\subjclass[2010]{Primary 37F15; Secondary 30C20, 30F45}
\newcommand{\C}{\mathbb{C}}
\newcommand{\CC}{\hat{\C}}
\newcommand{\D}{\mathbb{D}}
\newcommand{\N}{\mathbb{N}}
\newcommand{\cO}{\mathcal{O}}
\newcommand{\cS}{\mathcal{S}}
\DeclareMathOperator{\diam}{diam}
\DeclareMathOperator{\dist}{dist}
\DeclareMathOperator{\Crit}{Crit}
\begin{document}
\title[Expanding metrics for unicritical semihyperbolic
  polynomials]{Expanding metrics for unicritical \\[1ex]
    semihyperbolic polynomials}
\author{Lukas Geyer}
\address{Montana State University \\ Department of Mathematical
  Sciences \\ Bozeman, MT 59717--2400, USA \\ ORCiD ID:
  \url{https://orcid.org/0000-0001-5889-9037}}
\email{geyer@montana.edu}

\begin{abstract}
  We prove that unicritical polynomials $f(z)=z^d+c$ which are
  semihyperbolic, i.e., for which the critical point $0$ is a
  non-recurrent point in the Julia set, are uniformly expanding on the
  Julia set with respect to the metric $\rho(z) |dz|$, where
  $\rho(z) = 1+\dist(z,P(f))^{-1+1/d}$ has singularities on the
  postcritical set $P(f)$. We also show that this metric is
  H\"older equivalent to the usual Euclidean metric.
\end{abstract}

\maketitle

\tableofcontents

\section{Introduction}
In the theory of dynamics of polynomials and rational functions of
degree at least 2 in one complex dimension, by far the best understood
class is that of hyperbolic maps. Various weaker notions of
hyperbolicity have also been introduced and investigated. In order to
state definitions, results, and questions, let us introduce some
fairly standard notation. For background in complex dynamics, see one
of the many excellent textbooks, e.g., \cite{CarlesonGamelin1993} or
\cite{Milnor2006}.

\subsection{Notation and background}
For a rational map $f: \CC \to \CC$ of degree $d \ge 2$ we denote its
forward iterates by $f^n$, its set of critical points by $\Crit(f)$,
its \emph{postcritical set} by $P(f) = \overline{\bigcup_{n=1}^\infty
  f^n(\Crit f)}$, and its Julia set by $J(f)$. A \emph{periodic cycle}
of period $q \ge 1$ is a $q$-tuple of distinct points $Z = (z_0, z_1,
\ldots, z_{q-1})$ with $f(z_k) = z_{k+1}$ and $f(z_{q-1}) = z_0$. Its
\emph{multiplier} is $\lambda = (f^q)'(z_0) = (f^q)'(z_k)$. Since the
multiplier is invariant under M\"obius conjugacy, it is also defined
for cycles containing $\infty$. A periodic cycle $Z$ with multiplier
$\lambda$ is \emph{attracting} if $|\lambda|<1$, \emph{repelling} if
$|\lambda|>1$, and \emph{indifferent} if $|\lambda|=1$. An indifferent
cycle is \emph{rationally indifferent} (or \emph{parabolic}) if
$\lambda$ is a root of unity, \emph{irrationally indifferent}
otherwise.  A point $z$ is \emph{attracted} to the cycle $Z$ if
$\dist(f^n(z),Z) \to 0$ as $n \to \infty$. The \emph{$\omega$-limit
  set} $\omega(z) = \omega_f(z)$ of a point $z$ under $f$ is the set
of all limits $\lim_{k\to\infty} f^{n_k}(z)$ for sequences $n_k \to
\infty$.  A point $z \in \CC$ is \emph{non-recurrent} if it has a
neighborhood which does not contain any of its forward iterates
$f^n(z)$ for $n \ge 1$, or equivalently if $z \notin \omega(z)$. For
polynomials we denote the \emph{basin of infinity} by $A(\infty) =
A_f(\infty)$.
\begin{definition}
  Let $f:\CC \to \CC$ be a rational function of degree $d \ge 2$. Then
  $f$ is
  \begin{itemize}
  \item \emph{hyperbolic} if each critical points of $f$ is attracted
    by an attracting periodic cycle;
  \item \emph{subhyperbolic} if each critical point in the Fatou set
    of $f$ is attracted by an attracting periodic cycle, and each
    critical point in the Julia set of $f$ is strictly preperiodic;
  \item \emph{semihyperbolic} if each critical point in the Fatou set
    of $f$ is attracted by an attracting periodic cycle, and each
    critical point in the Julia set of $f$ is non-recurrent.
  \end{itemize}
\end{definition}
It is obvious that hyperbolic $\implies$ subhyperbolic $\implies$
semihyperbolic.  We are particularly interested in the question how
these notions are related to expansion on the Julia set $J(f)$, in the
following sense:
\begin{definition}
  \label{def:expansion}
  Let $f:\CC \to \CC$ be a rational function of degree $d \ge 2$ with
  $\infty \notin J(f)$, let $U$ be an open connected neighborhood of
  $J(f)$, and let $\rho:U \to (0,+\infty]$ be a continuous
  function. We denote by $d_\rho$ the induced path metric on $U$,
  defined by
  \[
    d_\rho(z_0,z_1) = \inf \left\{ \int_\gamma \rho(z)|dz| : \gamma
      \subseteq U \text{ piecewise smooth curve from } z_0 \text{ to }
      z_1 \right\}.
  \]
  Then
  \begin{enumerate}
  \item \label{it:expanding}
    $f$ is \emph{expanding} with respect to $\rho(z)|dz|$ if there
    exist $C>0$ and $\lambda > 1$ such that
    \begin{equation*}
      \frac{|(f^n)'(z)| \rho(f^n(z))}{\rho(z)} \ge C \lambda^n
    \end{equation*}
    for a dense set of $z \in J(f)$ and all $n \ge 1$.
  \item $\rho(z)|dz|$ \emph{induces the standard topology} (or is
    \emph{topologically equivalent} to the Euclidean metric) on $J(f)$
    if the identity map from $(J(f),d_\rho)$ to $J(f)$ equipped with
    the standard Euclidean metric is a homeomorphism.
  \item $\rho(z)|dz|$ is \emph{H\"older equivalent} to the Euclidean
    metric on $J(f)$ if the identity map from $(J(f),d_\rho)$ to
    $J(f)$ equipped with the standard Euclidean metric is bi-H\"older,
    i.e., it is H\"older continuous with H\"older continuous inverse.
  \end{enumerate}
\end{definition}
\begin{remark}
  The assumption that $\infty \notin J(f)$ is only added for
  convenience. In the case where $\infty \in J(f)$, one should replace
  the Euclidean metric with the spherical metric. Note that
  $d_\rho(z_0, z_1)$ might a priori be infinite.
\end{remark}
The following results are known, see e.g.\
\cite[Ch.~V]{CarlesonGamelin1993} or \cite[\S~19]{Milnor2006}.
\begin{theorem}
  \label{thm:subhyp-metric}
  A rational map $f$ with $\infty \notin J(f)$ is hyperbolic iff it is
  expanding with respect to the standard Euclidean metric. It is
  subhyperbolic iff it is expanding with respect to a metric
  $\rho(z)|dz|$ whose metric density $\rho(z)$ is a continuous
  function in an open connected neighborhood $U$ of $J(f)$ except for
  finitely many singularities $a_1, \ldots, a_q$, such that there
  exists $C>0$ and $\beta < 1$ with
  \begin{equation}
    \label{eq:subhyp-metric}
    C^{-1} \le \rho(z) \le \sum_{k=1} \frac{C}{|z-a_k|^\beta}
  \end{equation}
  for all $z \in U \setminus \{ a_1, \ldots, a_q \}$.
\end{theorem}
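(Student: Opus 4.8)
The plan is to establish the two equivalences separately; the one for hyperbolicity is classical, so I would only sketch it. If $f$ is hyperbolic then $P(f)$ is disjoint from the compact set $J(f)$, so $\CC\setminus P(f)$ is a hyperbolic Riemann surface, barring the exceptional case that $P(f)$ has at most two points, in which $f$ is conformally conjugate to $z\mapsto z^{\pm d}$ and one uses the flat cylinder metric instead. Since $f(P(f))\subseteq P(f)$, the map $f\colon\CC\setminus f^{-1}(P(f))\to\CC\setminus P(f)$ is an unramified covering of nested hyperbolic surfaces, so the Schwarz--Pick lemma gives $f^{*}\rho_{P(f)}=\rho_{f^{-1}(P(f))}\ge\rho_{P(f)}$, with strict inequality everywhere because $f^{-1}(P(f))\supsetneq P(f)$; since $J(f)$ is a compact subset of $\CC\setminus f^{-1}(P(f))$, the ratio $f^{*}\rho_{P(f)}/\rho_{P(f)}$ is bounded below by some $\lambda>1$ on $J(f)$, and the chain rule yields the same rate for all iterates. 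As $\rho_{P(f)}$ is comparable to $1$ near $J(f)$, this is Euclidean expansion. Conversely, Euclidean expansion on a dense subset of $J(f)$ propagates to all of $J(f)$ because $\{z:|(f^n)'(z)|\ge C\lambda^n\}$ is closed, so $J(f)$ is a hyperbolic set, and by the standard dichotomy (no critical points or non-repelling cycles in $J(f)$, no wandering domains by Sullivan's theorem) $f$ is hyperbolic.

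For the subhyperbolic statement, I would first record that $E:=P(f)\cap J(f)=\{a_1,\dots,a_q\}$ is finite and forward invariant: the critical points in $J(f)$ have finite forward orbits by hypothesis, while the postcritical orbits of the Fatou critical points accumulate only on attracting cycles and so remain in a fixed neighbourhood of the Fatou set; in particular there is a neighbourhood $U$ of $J(f)$ with $U\cap P(f)=E$. Then I would take $\rho$ to be the density of the canonical orbifold metric of $f$, i.e.\ (when $\cO$ is hyperbolic) the Poincar\'e metric of the orbifold $\cO=(\CC,\nu)$ whose weight $\nu$ is minimal subject to $\nu(z)\cdot\deg_z f$ dividing $\nu(f(z))$ for every $z$. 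Here $\nu\equiv 1$ off a finite set; for subhyperbolic $f$ the infinite-order cone points lie at the attracting cycles and their relevant preimages, hence in the Fatou set and outside $U$, while the finite-order cone points meeting $J(f)$ are among the $a_k$. Since $f$ covers $\cO$ by its refinement $\cO_f$, the Schwarz--Pick lemma on the universal cover $\D\to\cO$ gives $f^{*}\rho_\cO=\rho_{\cO_f}\ge\rho_\cO$, and strictness of the refinement together with compactness of $J(f)$ in the metric completion of $\cO$ --- which also takes care of the cone points, by a routine local computation --- promotes this to $f^{*}\rho_\cO/\rho_\cO\ge\lambda>1$ on $J(f)$. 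Finally $\rho_\cO$ is smooth and positive off the cone points, satisfies $\rho_\cO(z)\asymp|z-a_k|^{-1+1/\nu_k}$ near $a_k$, and is bounded below on the compact set $J(f)$; this gives \eqref{eq:subhyp-metric} with $\beta:=\max_k(1-1/\nu_k)<1$. The remaining case, in which $\cO$ is parabolic, is handled directly: $f$ then acts on a flat orbifold metric as a homothety of ratio $>1$, and since the punctures again lie outside $U$, this metric restricts near $J(f)$ to a density obeying \eqref{eq:subhyp-metric}.

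For the converse of the subhyperbolic statement, suppose $f$ is expanding with respect to such a $\rho$. Near a critical point $c\in J(f)$ of local degree $k\ge 2$ one has $|f'(z)|\asymp|z-c|^{k-1}\to 0$, so the expansion inequality with $n=1$, applied along points of the dense set converging to $c$, forces $\rho(f(z))\to\infty$ at a rate consistent with the upper bound in \eqref{eq:subhyp-metric} only if $f(c)$ is one of the $a_k$ (and $\beta\ge(k-1)/k$); iterating along the forward orbit of $c$ then confines that orbit to the finite set $\{a_1,\dots,a_q\}$, so $c$ is strictly preperiodic. Since $\rho$ is bounded above and below near any critical point in the Fatou set, the finitely many singularities do not interfere with the analysis of the Fatou dynamics, which proceeds as in the hyperbolic case and shows every Fatou critical point is attracted to an attracting cycle; hence $f$ is subhyperbolic.

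I expect the main obstacle to be the forward direction of the subhyperbolic case: setting up the canonical orbifold correctly, including the parabolic case, and above all upgrading the non-strict Schwarz--Pick inequality to genuine uniform expansion with an explicit rate $\lambda>1$ and with the sharp exponent $\beta=\max_k(1-1/\nu_k)$ in \eqref{eq:subhyp-metric}. The verification that ``dense set'' may be replaced by ``all of $J(f)$'' in the converse, and the exclusion of parabolic cycles, Siegel disks, Herman rings and Cremer points, are more routine but still need care.
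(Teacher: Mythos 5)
This theorem is not proved in the paper at all: it is stated as known background, with the proof deferred to Carleson--Gamelin (Ch.~V) and Milnor (\S~19). Your sketch reproduces precisely the argument of those references --- the hyperbolic metric of $\CC\setminus P(f)$ plus Schwarz--Pick for the hyperbolic equivalence, the canonical orbifold metric for the subhyperbolic one, and converses obtained by reading the expansion inequality at critical points and at non-repelling cycles --- so the architecture is the standard one and is sound.

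One step in your forward subhyperbolic argument is wrong as stated and needs repair. The orbifold $\cO=(\CC,\nu)$ with $\nu$ minimal subject to $\nu(z)\deg_z f\mid\nu(f(z))$ is \emph{not} an orbifold with finite (or even discrete) ramification locus once some critical point $c$ lies in a geometrically attracting, non-superattracting basin: minimality forces $\nu\ge 2$ along the entire infinite forward orbit of $c$, which accumulates at the attracting cycle, so ``$\nu\equiv 1$ off a finite set'' fails, and the difficulty is not confined to infinite-order cone points at the cycles themselves. The standard fix --- and what the cited sources actually do --- is to define the orbifold structure only on a neighborhood $N$ of $J(f)$ chosen so that $N\cap P(f)=P(f)\cap J(f)$ (finite, by strict preperiodicity of the Julia critical points) and $f(\CC\setminus N)\subseteq\CC\setminus N$, i.e.\ $f^{-1}(N)\subseteq N$; the cone orders are assigned on that finite set, and the uniform rate $\lambda>1$ then comes from the strict inclusion $f^{-1}(N)\subsetneq N$ together with the strict orbifold refinement, not from a global Schwarz--Pick on $\CC$. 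With that modification the rest of your outline --- the local estimate $\rho_{\cO}(z)\asymp|z-a_k|^{-1+1/\nu_k}$ giving $\beta=\max_k(1-1/\nu_k)<1$, the flat homothety in the parabolic-orbifold case, and the converse forcing the forward orbit of a Julia critical point into $\{a_1,\dots,a_q\}$ --- goes through as in the references; the remaining items you flag (upgrading non-strict to strict expansion at cone points, excluding rotation domains and Cremer points, passing from a dense subset of $J(f)$ to all of it) are indeed routine but genuinely use the expansion hypothesis, not merely the absence of critical points and non-repelling cycles in $J(f)$.
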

\begin{remark}
  In the case where $\infty \in J(f)$, a similar result with the
  spherical metric instead of the Euclidean metric is still true. In
  the subhyperbolic case, any metric $\rho(z)|dz|$ where $\rho$
  satisfies \eqref{eq:subhyp-metric} is H\"older equivalent to the
  Euclidean metric, so in particular it induces the same topology.
\end{remark}

\subsection{Results and questions on expanding metrics}
If one allows infinitely many singularities of the metric
$\rho(z)|dz|$, the following is a natural question.
\begin{question}
  Which rational maps $f$ admit expanding metrics $\rho(z)|dz|$
  topologically equivalent (or H\"older equivalent) to the spherical
  metric (or Euclidean metric in the case $\infty \notin J(f)$)?
\end{question}
\begin{remark}
  It is probably easier to first ask this only for polynomials. As far
  as the author knows, the answer is not known even in the quadratic
  family $f_c(z) = z^2+c$.
\end{remark}

In \cite[p.~9, Remark 2]{CarlesonJonesYoccoz1994}, Carleson, Jones,
and Yoccoz claim that an expanding metric can easily be constructed
for semihyperbolic polynomials. However, they only give a formula for
the case of quadratic polynomials, and they do not provide a proof
either of expansion or of the fact that this metric is topologically
equivalent to the Euclidean metric. Later, in his Ph.~D.\ thesis
\cite{Carette1997}, Carette provided a proof of expansion with respect
to the metric $\dist(z,P(f))^{-1/2}|dz|$ in the case of quadratic
polynomials, combining expansion on the postcritical set given by a
result of M\~a\'ne from \cite{Mane1993} with expansion in the
complement of the postcritical set with respect to its hyperbolic
metric. However, Carette's thesis was never published, and the part of
his proof that the singular metric induces the standard Euclidean
topology on the Julia set is flawed.

The goal of this paper is to fill in this gap in the literature in the
case of unicritical polynomials $f(z) = z^d + c$, combining the
results of Ma\~n\'e, Carleson, Jones, and Yoccoz with basic distortion
theorems and estimates of the hyperbolic and orbifold metric of
punctured disks. In particular, we are not combining two different
kinds of expansion like Carette did, which in the opinion of the
author makes this proof conceptually simpler. The proof that the
singular metric is topologically (in fact, H\"older) equivalent to the
Euclidean metric uses the fact that the basin of infinity is a John
domain, which was proved in \cite{CarlesonJonesYoccoz1994}.

Generalizing the proof to semihyperbolic polynomials with more than
one critical point seems to be non-trivial, for at least a couple of
different reasons: It is possible that the $\omega$-limit set of one
critical point contains another critical point, so the different parts
of the postcritical sets might not be ``separated'', and one might not
have expansion on the postcritical set with respect to the Euclidean
metric. Furthermore, the argument given in this paper for the fact
that the singular metric $\rho(z)|dz|$ induces the same topology on
the Julia set crucially uses that there are no bounded Fatou
components. Semihyperbolic polynomials with more than one critical
point can have bounded attracting domains, in which case our proof
does not work.

Whether expanding metrics exist for semihyperbolic rational functions,
or for polynomials satisfying weaker conditions such as Collet-Eckmann
are very interesting question which warrant further investigation. In
order to allow for metrics $\rho$ with singularities on all of $J(f)$,
the definition of expansion might have to be modified for these
cases.

Our main result is the following, proved in Section
\ref{sec:unicritical}.

\begin{restatable*}{theorem}{MainTheorem}
  \label{thm:main-thm}
  Assume that the polynomial $f(z) = z^d+c$ is semihyperbolic, but not
  hyperbolic. Then $f$ is expanding with respect to the metric
  $\rho(z)|dz|$, where $\rho(z) = 1+
  \frac{1}{\dist(z,P(f))^{1-1/d}}$. Explicitly, there exist $C>0$ and
  $\lambda > 1$ such that
  \[
    |(f^n)'(z)| \rho(f^n(z)) \ge C \lambda^n \rho(z)
  \]
  for all $z \in J(f) \setminus P(f)$ and all $n \ge 1$. Furthermore,
  the metric $\rho(z)|dz|$ is H\"older equivalent to the Euclidean
  metric on $\C$.
\end{restatable*}
\begin{remark}
  The neighborhood of the Julia set here is $U=\C$, so we consider the
  induced path metric $d_\rho$ on the whole plane. Note that the
  postcritical set $P(f)$ is a forward-invariant compact proper subset
  of the Julia set, which implies that it is nowhere dense in
  $J(f)$.

  In the case of subhyperbolic polynomials, the metric $\rho(z)|dz|$
  is a metric satisfying the conditions of
  Theorem~\ref{thm:subhyp-metric} with $\beta = 1-1/d$, and it is in
  fact bi-Lipschitz equivalent in a neighborhood of the Julia set to
  the canonical orbifold metric associated to $f$. For a definition of
  the canonical orbifold metric and its use in the proof of expansion
  in the subhyperbolic case, see \cite[Ch.~V]{CarlesonGamelin1993} or
  \cite[\S~19]{Milnor2006}. It seems that semihyperbolic polynomials
  which are not subhyperbolic do not have a useful associated orbifold
  structure, since $P(f) \cap J(f)$ is not a discrete set in that
  case.
\end{remark}

\section{Distortion estimates, hyperbolic and orbifold metrics}
\label{sec:distortion-estimates}
We will use the notation $B(z_0, r) = \{ z \in \C: |z-z_0| < r \}$,
$\bar{B}(z_0,r) = \overline{B(z_0,r)}$, and $S(z_0, r) = \partial
B(z_0, r) = \{ z \in \C: |z-z_0| = r \}$ for the open and closed disk,
and the circle of radius $r>0$ centered at $z_0 \in \C$, respectively.

Koebe's distortion theorems are usually stated for normalized
conformal maps, see e.g.\ \cite{Duren1983}. The following versions for
general conformal maps in disks are immediate consequences by
translation and rescaling.
\begin{theorem}[Koebe Distortion and 1/4-Theorem]
  Let $g:B(z_0, r) \to \C$ be univalent with $w_0 = g(z_0)$. Then
  \begin{equation}
    \frac{|g'(z_0)|}{\left(1+\frac{|z-z_0|}{r}\right)^2} \le \left|
      \frac{g(z)-g(z_0)}{z-z_0} \right| \le
    \frac{|g'(z_0)|}{\left(1-\frac{|z-z_0|}{r}\right)^2}  
  \end{equation}
  for all $z \in B(z_0, r) \setminus \{z_0\}$. Furthermore, the disk
  $B\left(w_0, \frac{|g'(z_0)| r}{4}\right)$ is contained in the image
  $g(B(z_0,r))$.
\end{theorem}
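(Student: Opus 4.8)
The plan is to deduce both statements from the classical versions for normalized univalent functions on the unit disk $\D$, and to pass to the general case by an affine change of coordinates, exactly as the sentence preceding the theorem announces. Recall the classical facts (see \cite{Duren1983}): if $\phi:\D\to\C$ is univalent with $\phi(0)=0$ and $\phi'(0)=1$, then the Koebe growth theorem gives
\[
  \frac{|\zeta|}{(1+|\zeta|)^2} \le |\phi(\zeta)| \le \frac{|\zeta|}{(1-|\zeta|)^2}
  \qquad\text{for all }\zeta\in\D,
\]
and the Koebe $1/4$-theorem gives $B(0,1/4)\subseteq\phi(\D)$.

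To carry out the reduction, let $g:B(z_0,r)\to\C$ be univalent with $w_0=g(z_0)$; since $g$ is injective and holomorphic, $g'(z_0)\neq 0$. Define $\phi:\D\to\C$ by
\[
  \phi(\zeta) = \frac{g(z_0+r\zeta)-w_0}{r\,g'(z_0)}.
\]
Being a composition of $g$ with affine maps, $\phi$ is univalent on $\D$, and a direct computation gives $\phi(0)=0$ and $\phi'(0)=1$, so the classical statements apply to $\phi$. Now substitute $\zeta=(z-z_0)/r$, which ranges over $\D\setminus\{0\}$ as $z$ ranges over $B(z_0,r)\setminus\{z_0\}$; then $|\zeta|=|z-z_0|/r$ and $\phi(\zeta)=(g(z)-w_0)/(r\,g'(z_0))$, so the growth theorem for $\phi$ reads
\[
  \frac{|z-z_0|/r}{(1+|z-z_0|/r)^2} \le \frac{|g(z)-g(z_0)|}{r\,|g'(z_0)|} \le \frac{|z-z_0|/r}{(1-|z-z_0|/r)^2},
\]
and multiplying through by $r|g'(z_0)|/|z-z_0|$ gives the asserted two-sided bound on $|(g(z)-g(z_0))/(z-z_0)|$. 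For the inclusion, $B(0,1/4)\subseteq\phi(\D)$ together with the affine bijection $w\mapsto w_0+r\,g'(z_0)\,w$, which carries $\phi(\D)$ onto $g(B(z_0,r))$ and $B(0,1/4)$ onto $B(w_0,|g'(z_0)|r/4)$, yields $B(w_0,|g'(z_0)|r/4)\subseteq g(B(z_0,r))$.

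There is no substantial obstacle in this argument; the only points needing care are verifying that the renormalized map $\phi$ is genuinely normalized — univalence and the two conditions at the origin — and faithfully tracking the affine scaling $\zeta\mapsto z_0+r\zeta$, $w\mapsto w_0+r\,g'(z_0)\,w$ when transporting the unit-disk estimates back to $B(z_0,r)$. If one wanted a self-contained treatment one could instead derive the classical unit-disk statements from the area theorem and the Bieberbach coefficient bound $|a_2|\le 2$, but since these are entirely standard we simply cite \cite{Duren1983}.
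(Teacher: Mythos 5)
Your proposal is correct and follows exactly the route the paper indicates: the paper gives no separate proof, stating only that these versions are immediate consequences of the classical normalized statements in \cite{Duren1983} by translation and rescaling, which is precisely the affine renormalization $\phi(\zeta) = (g(z_0+r\zeta)-w_0)/(r\,g'(z_0))$ you carry out. Your computation correctly verifies the normalization and transports both the growth estimate and the $1/4$-theorem back to $B(z_0,r)$.
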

We will frequently apply this theorem to inverse branches of iterates
of $f$. We define the \emph{conformal radius} $r(U,z_0)$ of a simply
connected domain $U \subsetneq \C$ with respect to a point $z_0 \in U$
as the unique $r>0$ for which there is a conformal map
$g:B(0,r) \to U$ with $g(0)=z_0$ and $g'(0)=1$. If we denote the
hyperbolic metric of $U$ by $\delta_U(z) |dz|$, then conformal
invariance of the hyperbolic metric and the explicit formula
$\delta_{B(0,r)} = \frac{2r}{r^2-|z|^2}$ immediately give the
well-known relation
\begin{equation}
  \label{eq:conf-radius-hyp-metric}
  \delta_U(z_0) = \frac{2}{r(U,z_0)}
\end{equation}
between conformal radius and density of the hyperbolic
metric. Furthermore, by the Schwarz lemma we know that $U$ can not be
contained in any disk $B(z_0,r')$ with $r'<r$, so that
\begin{equation}
  \label{eq:conf-radius-diam}
  \diam U \ge r(U,z_0).
\end{equation}
It also immediately follows from the definition that the conformal
radius of the image of $U$ under a conformal map $g:U \to V$ with
respect to the point $w_0 = g(z_0)$ is
\begin{equation}
  \label{eq:conf-radius}
  r(V,w_0) = |g'(z_0)|r(U,z_0).
\end{equation}

An \emph{orbifold} is a topological space locally modeled on the
quotient of Euclidean space by a finite group. In the context of
complex analysis, we are mostly interested in \emph{Riemann orbifolds}
(as a generalization of Riemann surfaces) which are locally modeled on
the unit disk $\D$ or on the quotient of the unit disk by the group
$\Gamma_d$ of rotations of $d$-th roots of unity. For background on
Riemann orbifolds see \cite{McMullen1994}.

A point on the orbifold which corresponds to the fixed point $0$ of
$\Gamma_d$ in such a chart is called a \emph{cone point of order
  $d$}. If $\phi:U \to \D /\Gamma_d$ is such a chart, then
$\phi_d:U \to \D$, $\phi_d(z) = \phi(z)^d$ is a homeomorphism. If we
replace each chart $\phi$ by $\phi_d$, we get an atlas of a Riemann
surface, the \emph{underlying} Riemann surface $\cS$ of the orbifold
$\cO$. This also shows that every Riemann orbifold can be specified by
giving a Riemann surface $\cS$ and a discrete set of cone points $z_k$
with corresponding orders $d_k = d(z_k) \ge 2$. A map
$f:\cO \to \tilde\cO$ between orbifolds is an \emph{analytic orbifold
  map} if $f:\cS \to \tilde\cS$ is an analytic map between the
underlying Riemann surfaces satisfying
\begin{equation}
  d(f(z)) \mid d(z) \deg(f,z)
\end{equation}
for all $z \in \cO$, where $d(z)$ and $d(f(z))$ are the orders of the
cone points at $z$ and $f(z)$, respectively, with the convention that
$d(z) = 1$ for points which are not cone points. It is an
\emph{analytic orbifold covering map} if it is an analytic branched
covering from $\cS$ to $\tilde\cS$, with
\begin{equation}
  d(f(z)) = d(z) \deg(f,z)
\end{equation}
for all $z \in \cO$. It is known \cite[Theorem~A.2]{McMullen1994} that
the only Riemann orbifolds which do not have a Riemann surface cover
are the \emph{teardrop} ($\CC$ with one cone point of order $d \ge 2$)
and the \emph{spindle} ($\CC$ with two cone points of orders $d_1,d_2
\ge 2$ with $d_1 \ne d_2$). All other Riemann orbifolds have as
universal cover either the sphere, the plane, or the disk,
respectively called \emph{elliptic}, \emph{parabolic}, or
\emph{hyperbolic}. A Riemann orbifold $\cO$ with underlying Riemann
surface $\cS$ is hyperbolic iff its Euler characteristic
\[
  \chi(\cO) = \chi(\cS) - \sum_k \left( 1-\frac1{d(z_k)} \right)
\]
is negative. This shows that most orbifolds are hyperbolic, and that
in particular every orbifold whose underlying Riemann surface is
hyperbolic is itself hyperbolic. If $\cO$ is a hyperbolic Riemann
orbifold, and if $\phi: \D \to \cO$ is an analytic orbifold covering
map, then $\cO$ inherits a hyperbolic metric from pushing forward the
hyperbolic metric $\frac{2|dz|}{1-|z|^2}$ on the unit disk, and since
any two such analytic orbifold covering map differ by composition with
a hyperbolic isometry, this metric is independent of the choice of
$\phi$. We call this metric the \emph{hyperbolic orbifold metric} of
$\cO$ and denote it by $\delta_\cO(z) |dz|$, and the induced
\emph{hyperbolic orbifold distance} between points $z,w \in \cO$ by
$\delta_\cO(z,w)$.  As a special case, if $\cO = \cS$ is a Riemann
surface, the hyperbolic orbifold metric $\delta_\cS(z) |dz|$ is just
the hyperbolic metric of $\cS$. For convenience, we will also
introduce the \emph{pseudo-hyperbolic orbifold distance} $p_\cO(z,w)$,
defined in the unit disk by
\[
  p_\D(z,w) = \left| \frac{z-w}{1-\overline{w}z} \right|,
\]
and pushed forward by the analytic orbifold covering $\phi:\D \to \cO$
as usual. The pseudo-hyperbolic distance is a metric, but it is not
generated by a path metric and it is not complete if $\cO$ is not
compact. The hyperbolic and pseudo-hyperbolic distances are related by
\[
  \delta_\cO(z,w) = 2\tanh^{-1} p_\cO(z,w) = \log
  \frac{1+p_\cO(z,w)}{1-p_\cO(z,w)}.
\]
The next rather elementary, but very useful lemma compares the
hyperbolic metric and the hyperbolic orbifold metric in the case of
simply connected domains in the plane with a single cone point.
\begin{theorem}
  \label{thm:metric-comparison}
  Let $U \subsetneq \C$ be a simply connected domain, let $z_0 \in
  U$, $d \ge 2$, and let $\cO$ be the Riemann orbifold with
  underlying domain $U$ and a single cone point of order $d$ at
  $z_0$, Then
  \[
    \frac{\delta_\cO(z)}{\delta_{U}(z)} = F_d(p_U(z_0,z)) \qquad
    \text{for } z \in U \setminus\{z_0\},
  \]
  with
  \[
    F_d(t) = \frac{1-t^2}{dt^{1-1/d} (1-t^{2/d})} = \frac{t^{-1}
      -t}{d(t^{-1/d} - t^{1/d})} \qquad \text{for } 0 < t < 1.
  \]
  Furthermore, we have the estimates
  \[
    \frac{1}{d} \le  F_d(t) t^{1-1/d} \le 1 \qquad \text{for } 0 < t <
    1.
  \]
\end{theorem}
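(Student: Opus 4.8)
The plan is to reduce to the model case $U=\D$, $z_0=0$ by conformal invariance, compute both metric densities explicitly through the degree-$d$ power map, and finish with an elementary estimate on the resulting rational expression.

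First I would apply the Riemann mapping theorem: choose a conformal map $\psi:\D\to U$ with $\psi(0)=z_0$. A conformal map is an analytic orbifold isomorphism from $\D$ with a single cone point of order $d$ at $0$ onto $\cO$, since it carries the cone point to the cone point and preserves its order. Because the hyperbolic metric $\delta_U$, the hyperbolic orbifold metric $\delta_\cO$, and the pseudo-hyperbolic orbifold distance $p_U$ are all conformally invariant, at $z=\psi(w)$ the ratio $\delta_\cO(z)/\delta_U(z)$ equals the corresponding ratio for the model orbifold at $w$, and $p_U(z_0,z)=p_\D(0,w)=|w|$. Hence it suffices to treat $U=\D$, $z_0=0$, $t=|z|$.

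Next I would identify the orbifold universal cover. The power map $\pi:\D\to\D$, $\pi(w)=w^d$, is a proper degree-$d$ branched covering of $\D$ onto $\D$, branched only over $0$ with local degree $d$ there. Since the source $\D$ has no cone points and the target $\cO$ has a cone point of order $d$ at $0$, the relation $d(\pi(w))=d(w)\deg(\pi,w)$ holds at every $w$ (at $w=0$ it reads $d=1\cdot d$, elsewhere $1=1\cdot 1$), so $\pi$ is an analytic orbifold covering; as $\D$ is simply connected and $\cO$ is hyperbolic (its underlying surface $\D$ is hyperbolic), $\pi$ is the orbifold universal covering. By definition $\delta_\cO(z)|dz|$ is the push-forward of $\frac{2|dw|}{1-|w|^2}$ under $\pi$. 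Writing $z=w^d$, so that $|dz|=d|w|^{d-1}|dw|$ and $|w|^2=|z|^{2/d}$, we obtain
\[
\delta_\cO(z)=\frac{2}{1-|z|^{2/d}}\cdot\frac{1}{d\,|z|^{1-1/d}}=\frac{2}{d\,|z|^{1-1/d}(1-|z|^{2/d})}.
\]
Dividing by $\delta_\D(z)=\frac{2}{1-|z|^2}$ gives precisely $F_d(|z|)$, which is the first assertion with $t=|z|=p_U(z_0,z)$.

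Finally, for the two-sided bound I would substitute $s=t^{2/d}\in(0,1)$ into
\[
F_d(t)\,t^{1-1/d}=\frac{1-t^2}{d(1-t^{2/d})}=\frac{1-s^d}{d(1-s)}=\frac{1}{d}\bigl(1+s+s^2+\cdots+s^{d-1}\bigr).
\]
Since $0<s<1$, the geometric sum lies strictly between $1$ (its limiting value as $s\to 0^+$) and $d$ (its limiting value as $s\to 1^-$), so $\tfrac1d\le F_d(t)\,t^{1-1/d}\le 1$, with both bounds sharp. The whole argument is routine; the only points that deserve a word of care are the bookkeeping under the Riemann map — checking that the cone-point structure and the pseudo-hyperbolic orbifold distance transport correctly — and the identification of $w\mapsto w^d$ as the orbifold universal covering of $\cO$, neither of which presents a real obstacle.
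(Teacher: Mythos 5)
Your proposal is correct and follows essentially the same route as the paper: reduce to the model orbifold on $\D$ with cone point at $0$ via a Riemann map (using conformal invariance of both metrics and of $p_U$), compute $\delta_{\cO_d}$ explicitly through the covering $w \mapsto w^d$, and obtain the bounds from the finite geometric sum $\frac{1-t^2}{d(1-t^{2/d})} = \frac{1}{d}\sum_{k=0}^{d-1} t^{2k/d}$. No gaps.
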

\begin{proof}
  Let $\psi: U \to \D$ be a conformal map from the underlying domain
  $U$ to $\D$ with $\psi(z_0) = 0$. Then $\psi$ is also an orbifold
  covering map of the orbifold $\cO$ to the orbifold $\cO_d$ with
  underlying domain $\D$ and a cone point of order $d$ at
  $0$. Analytic orbifold covering maps lift to hyperbolic isometries,
  so they are local isometries with respect to the respective
  hyperbolic orbifold metrics, i.e.,
  $\delta_{\cO_d}(\psi(z))|\psi'(z)| = \delta_{\cO}(z)$ for all $z \in
  \cO$. The same is true for the hyperbolic metrics of the underlying
  domains, i.e., $\delta_{\D}(\psi(z)) |\psi'(z)| = \delta_U(z)$ for
  all $z \in U$. This implies that
  \[
    \frac{\delta_\cO(z)}{\delta_U(z)} =
    \frac{\delta_{\cO_d}(\psi(z))}{\delta_{\D}(\psi(z))} 
  \]
  for all $z \in U$. Since conformal maps of simply connected domains
  are isometries with respect to the hyperbolic and pseudo-hyperbolic
  metrics, we also get that $p_U(z_0,z) = p_\D(0,\psi(z))$, so it is
  enough to prove the theorem for the case of $U = \D$, $z_0=0$, and
  $\cO = \cO_d$. In this case a hyperbolic orbifold covering
  $\phi:\D \to \cO$ is given by $\phi(z) = z^d$, so the hyperbolic
  orbifold metric of $\cO_d$ is given by
  \[
    \delta_{\cO_d}(z) = \frac{2}{d|z|^{1-1/d}(1-|z|^{2/d})}.
  \]
  This shows that 
  \[
    \frac{\delta_{\cO_d}(z)}{\delta_\D(z)} =
    \frac{1-|z|^2}{d|z|^{1-1/d} (1-|z|^{2/d})} =
    \frac{|z|^{-1}-|z|}{d(|z|^{-1/d} - |z|^{1/d})} = F_d(|z|)
  \]
  for all $z \in \D$, and since $|z| = p_\D(0,z)$ the first claim in
  the theorem follows. The claimed estimates follow easily from the
  explicit calculation
  \[
    F_d(t) t^{1-1/d} = \frac{1-t^2}{d(1-t^{2/d})} =  \frac{1 +
    t^{2/d} + t^{4/d} + \ldots + t^{(2d-2)/d}}{d}.
  \]
\end{proof}

\section{John domains, Gehring trees and singular metrics}
Roughly speaking, a John domain is a domain in which one can connect
any two points without getting too close to the boundary. This concept
was introduced in the context of elasticity by John in
\cite{John1961}, and the term ``John domain'' was coined by Martio and
Sarvas in \cite{MartioSarvas1979}. By now, John domains have found
many applications in various branches of geometric analysis, and there
are multiple different equivalent definitions. For an excellent
in-depth survey and the comparison of various definitions for both
bounded and unbounded domains see \cite{NakkiVaisala1991}.
There are some subtleties in the case of unbounded domains, where the
various different common definitions of John domains are not always
equivalent. However, the difficulties only appear in the case where
$\infty$ is a boundary point (when viewing these domains as subsets of
the Riemann sphere), whereas in the setting of our paper we always
have that $G \subset \CC$ is the basin of infinity for a polynomial,
so $\infty$ is an interior point of $G$.

\begin{definition}
  A domain $G \subsetneq \CC$ with $\infty \notin \partial G$ is a
  \emph{John domain} if there exists a constant $c \in (0,1)$ such
  that the following holds: For every $z_0,z_1 \in G$ there exists a
  simple path $\gamma\subset G$ from $z_0$ to $z_1$ in $G$ such that
  \[
    \dist(z,\partial G) \ge c \min \{ l(\gamma(z_0,z)),
    l(\gamma(z,z_1)) \}
  \]
  for all $z \in \gamma$, where $\gamma(z_0,z)$ and $\gamma(z,z_1)$
  are the sub-paths from $z_0$ to $z$ and from $z$ to $z_1$,
  respectively, and where $l(.)$ denotes Euclidean arc
  length. Any such path is called a \emph{John path} (or \emph{John
    arc} in the case where $\gamma$ is an arc) from $z_0$ to $z_1$.
\end{definition}

Note that we allow $z_0$ or $z_1$ to be infinite, in which case the
minimum on the right is the length of the bounded sub-path.

Since we are only dealing with unicritical semihyperbolic
polynomials, there are no bounded Fatou components, so the Julia set
$J = \partial G = \CC \setminus G$ is a continuum with empty interior
which does not disconnect the plane. Following \cite{LinRohde2018}, we
call such a continuum $J \subset \C$ whose complement $G = \CC
\setminus J$ is a John domain, a \emph{Gehring tree}. By the Riemann
mapping theorem there exists a conformal map $\phi: \Delta \to G$,
where $\Delta = \CC \setminus \overline{\D}$. For convenience we
choose the unique such map for which $\phi(\infty) = \infty$ and
$\lim\limits_{z \to \infty} \frac{\phi(z)}{z} \in (0,\infty)$. Since
boundaries of John domains are locally connected, $\phi$ extends
continuously to the closure $\overline{\Delta}$.  Following the
conventions in complex dynamics, we will call the parametrized curve
$\gamma(r) = \phi(r e^{i\theta})$, $1 \le r \le \infty$, the
\emph{external ray} of angle $\theta$. Note that external rays are
hyperbolic geodesics.

By \cite[Theorem~5.2]{NakkiVaisala1991}, see also
\cite[Theorem~4.1]{GehringHagMartio1989}, in the case of simply
connected hyperbolic domains, hyperbolic geodesics are John arcs with
some constant $c' \in (0,1)$ which only depends on the John constant
$c$. It is clear by a simple limit argument that hyperbolic geodesics
terminating at a point in $\partial G$ still satisfy the John
condition. In the rest of this section we adopt the convention that
$c \in (0,1)$ is a John constant which in particular works for all
hyperbolic geodesics, our ``canonical'' John arcs.

\begin{theorem}
  \label{thm:john-metric}
  Let $J \subset \C$ be a Gehring tree, $\alpha \in (0,1)$ be a
  constant, and let $\rho(z) = 1+\frac{1}{\dist(z,J)^\alpha}$. Then
  the path metric in $\C$ defined by $\rho(z)|dz|$ is H\"older
  equivalent to the Euclidean metric
\end{theorem}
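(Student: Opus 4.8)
The plan is to show that the identity map $\iota\colon(\C,d_\rho)\to(\C,|\cdot|)$ and its inverse are each Hölder continuous. Since $\rho \ge 1$ everywhere, every curve has $d_\rho$-length at least its Euclidean length, so $|z_0-z_1|\le d_\rho(z_0,z_1)$; this gives Hölder (in fact Lipschitz) continuity of $\iota^{-1}$ immediately and for free. The real content is the other direction: I must produce, for any $z_0,z_1\in\C$, a curve $\gamma$ from $z_0$ to $z_1$ with $\int_\gamma\rho(z)|dz| \lesssim |z_0-z_1|^\beta$ for some fixed $\beta\in(0,1)$. It suffices to treat $|z_0-z_1|$ small, and by scaling/normalization one may assume $\operatorname{diam} J$ is comparable to $1$.

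The key geometric input is that $G=\CC\setminus J$ is a John domain, so I take $\gamma$ to be (a piece of) a hyperbolic geodesic of $G$, which by the discussion preceding the theorem is a John arc with a fixed constant $c$. First I handle the case where the segment $[z_0,z_1]$ stays well away from $J$, say $\dist([z_0,z_1],J)\ge |z_0-z_1|$: then $\rho$ is bounded by $O(|z_0-z_1|^{-\alpha})$ on the segment of length $|z_0-z_1|$, giving $\int\rho|dz| = O(|z_0-z_1|^{1-\alpha})$, and $\beta=1-\alpha$ works here. The harder case is when $z_0,z_1$ (or the geodesic between them) come close to $J$. Here I use the John property of the arc $\gamma$ joining $z_0$ to $z_1$: parametrizing $\gamma$ by arclength from its midpoint, the distance to $J$ grows at least linearly, $\dist(\gamma(t),J)\ge c\,|t|$, as long as we are within the ``bounded half'' of length $L:=\ell(\gamma)$. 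I must first bound $L$ in terms of $|z_0-z_1|$: in a John domain, hyperbolic geodesics are \emph{cigar} curves, so $\ell(\gamma) \le C|z_0-z_1|^{\beta_0}$ for some $\beta_0\in(0,1]$ depending only on $c$ (this is the standard fact that John domains are Hölder domains / have the quasihyperbolic cigar property — it also follows from Koebe distortion applied to the Riemann map $\phi\colon\Delta\to G$, using that $\phi$ is Hölder up to the boundary, a known consequence of the John condition). Granting $\ell(\gamma)\le C|z_0-z_1|^{\beta_0}$, I estimate
\[
  \int_\gamma \rho(z)|dz| \le \ell(\gamma) + 2\int_0^{L/2} \frac{dt}{(ct)^\alpha}
  = \ell(\gamma) + \frac{2\,(L/2)^{1-\alpha}}{c^\alpha(1-\alpha)}
  \le C' |z_0-z_1|^{\beta_0(1-\alpha)},
\]
so $d_\rho(z_0,z_1)\lesssim |z_0-z_1|^{\beta}$ with $\beta = \beta_0(1-\alpha)\in(0,1)$, completing the Hölder continuity of $\iota$.

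The main obstacle is making the two reductions clean and uniform: (i) justifying the length bound $\ell(\gamma)\le C|z_0-z_1|^{\beta_0}$ for the John arc (equivalently, Hölder continuity of the boundary extension of $\phi$) with a constant depending only on the John constant, and (ii) handling the borderline subtlety that when $z_0$ or $z_1$ lies \emph{on} $J$ the geodesic degenerates — here the limiting argument mentioned before the theorem statement, that geodesics terminating on $\partial G$ still satisfy the John condition with the same $c$, is what makes the integral $\int_0^{L/2} (ct)^{-\alpha}\,dt$ converge (this is exactly where $\alpha<1$ is essential). A final bookkeeping point: the path metric is taken in all of $\C$, not just in $G$, so when $z_0,z_1\in J$ I should note that the John arc in $G$ with endpoints on $\partial G$ is an admissible competitor for $d_\rho$, which it is since it lies in $\overline G\subseteq\C$ and $\rho$ is integrable along it by the computation above. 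Once both Hölder bounds are in hand, $\iota$ is a bi-Hölder homeomorphism, which is the assertion.
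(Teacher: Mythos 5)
Your lower bound and your computation of the $\rho$-length of a John arc in terms of its Euclidean length (integrating $\dist(\gamma(t),J)^{-\alpha}\ge$ nothing worse than $(ct)^{-\alpha}$ from each endpoint toward the midpoint) are both correct and match the first part of the paper's argument. But your step (i) contains a genuine error: the claimed bound $\ell(\gamma)\le C|z_0-z_1|^{\beta_0}$ for a hyperbolic geodesic joining $z_0$ to $z_1$ in $G$ is false. What the John/H\"older-domain machinery gives you is control of $\diam\gamma$ or $\ell(\gamma)$ in terms of the \emph{hyperbolic} (or quasihyperbolic) distance between $z_0$ and $z_1$, or H\"older continuity of $\phi$ from $\Delta$ to $G$ --- never a bound on the internal distance in $G$ by a power of the Euclidean distance. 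A slit is the standard counterexample, and it is a Gehring tree: take $J=[-1,1]$, $z_0=\epsilon i$, $z_1=-\epsilon i$. Then $|z_0-z_1|=2\epsilon$, but every path from $z_0$ to $z_1$ inside $G=\CC\setminus[-1,1]$ has length at least $2$, so no geodesic of $G$ can have length $O(\epsilon^{\beta_0})$. Your argument therefore only proves that $d_\rho$ is H\"older equivalent to the \emph{internal} Euclidean metric of $G$, which is strictly weaker than the theorem. (The paper makes exactly this point, with exactly this example, right after establishing the length estimate you also derive.)

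The missing idea is that the competitor path must be allowed to leave $G$ and pass through points of $J$ itself; since $J$ has empty interior and $\alpha<1$, $\rho$ is still integrable along suitable such paths. The paper's construction: for $z_0\in J$ and $r$ small, send each point of $\bar B(z_0,r)$ out along an external ray to its first hit of $S(z_0,2r)$ (the John property bounds both the Euclidean and hence the $\rho$-length of that ray segment by $Cr^{1-\alpha}$), cover the ``thick'' part $\{z\in S(z_0,2r):\dist(z,J)\ge cr\}$ by a bounded number $N=N(c)$ of circular arcs each of $\rho$-length $O(r^{1-\alpha})$, and chain any two points of $\bar B(z_0,r)$ together through rays and arcs --- the chaining passes through landing points of external rays, i.e.\ through $J$. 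This yields $\diam_\rho \bar B(z_0,r)\le C r^{1-\alpha}$, and the general case follows by the dichotomy of whether $\bar B(z_0,2r)$ meets $J$. Without some construction of this kind that crosses $J$, the approach via a single geodesic of $G$ cannot close.
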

In the context of semihyperbolic polynomials, the metric is defined
slightly differently, where the Julia set $J$ in the definition of the
metric is replaced by the postcritical set $P$. However, the same
result follows easily.
\begin{corollary}
  \label{cor:john-metric}
  Let $J \subset \C$ be a Gehring tree, $P \subset J$ be a compact
  non-empty set, $\alpha \in (0,1)$ be a constant, and let $\rho(z) =
  1+\frac{1}{\dist(z,P)^\alpha}$. Then the path metric in $\C$ defined
  by $\rho(z)|dz|$ is H\"older equivalent to the Euclidean metric.
\end{corollary}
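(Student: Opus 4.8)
The plan is to deduce the corollary directly from Theorem~\ref{thm:john-metric} by a simple monotonicity argument, with no new geometry required. Write $\rho_P(z) = 1 + \dist(z,P)^{-\alpha}$ and $\rho_J(z) = 1 + \dist(z,J)^{-\alpha}$, and let $d_{\rho_P}$ and $d_{\rho_J}$ denote the associated path metrics on $\C$. The first step is the pointwise comparison: since $\emptyset \ne P \subseteq J$, we have $\dist(z,P) \ge \dist(z,J)$ for every $z \in \C$, hence $\rho_P(z) \le \rho_J(z)$ everywhere. Integrating this inequality along an arbitrary piecewise smooth curve and passing to the infimum over curves gives $d_{\rho_P}(z,w) \le d_{\rho_J}(z,w)$ for all $z,w \in \C$. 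On the other side, the trivial bound $\rho_P \ge 1$ yields $d_{\rho_P}(z,w) \ge |z-w|$. In particular $d_{\rho_P}$ is finite, since $d_{\rho_P} \le d_{\rho_J} < \infty$ by Theorem~\ref{thm:john-metric}.

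The second step is to combine these two inequalities with Theorem~\ref{thm:john-metric}. That theorem asserts that the identity map between $(\C, d_{\rho_J})$ and $\C$ with the Euclidean metric is bi-H\"older; concretely, on any fixed bounded set there exist $C > 0$ and $\beta \in (0,1)$ with $d_{\rho_J}(z,w) \le C|z-w|^\beta$. Chaining the estimates produces the sandwich $|z-w| \le d_{\rho_P}(z,w) \le d_{\rho_J}(z,w) \le C|z-w|^\beta$, valid on bounded subsets of $\C$. The left inequality shows that the identity map $(\C, d_{\rho_P}) \to (\C, |\cdot|)$ is $1$-Lipschitz, hence H\"older continuous; the right inequality shows that the inverse identity $(\C, |\cdot|) \to (\C, d_{\rho_P})$ is H\"older continuous with exponent $\beta$. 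Thus $\rho_P(z)|dz|$ is H\"older equivalent to the Euclidean metric, which is the assertion of the corollary.

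Since the whole argument reduces to two elementary inequalities together with a citation of the preceding theorem, there is no genuine obstacle here. The only minor point requiring care is the same one already implicit in Theorem~\ref{thm:john-metric}: on all of $\C$ the quantity $d_{\rho_J}(z,w)$ is comparable to $|z-w|$ for far-apart points, so the upper H\"older bound must be read on compact subsets of $\C$ (equivalently, one states bi-H\"older equivalence locally, or restricted to $J$), and one should check that bounding $d_{\rho_P}$ above by $d_{\rho_J}$ and below by $|\cdot|$ does transfer the bi-H\"older property of the identity map in both directions, which is immediate from the displayed sandwich inequality.
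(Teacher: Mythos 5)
Your argument is exactly the paper's own "sandwiching" proof: from $P\subseteq J$ you get $\rho_P\le\rho_J$ pointwise, and together with $\rho_P\ge 1$ the path metric $d_{\rho_P}$ is squeezed between the Euclidean metric and $d_{\rho_J}$, so the Hölder equivalence of Theorem~\ref{thm:john-metric} transfers. The proposal is correct and matches the paper's approach.
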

\begin{proof}[Proof of Corollary~\ref{cor:john-metric}]
  This is a simple ``sandwiching'' argument.  Let us denote the two
  metrics by $\rho_P$ and $\rho_J$. We have that $|dz| \le \rho_P(z)
  |dz| \le \rho_J(z) |dz|$, and since $\rho_J(z) |dz|$ is H\"older
  equivalent to the Euclidean metric by Theorem~\ref{thm:john-metric},
  the same is true for $\rho_P(z)|dz|$.
\end{proof}
\begin{proof}[Proof of Theorem~\ref{thm:john-metric}]
  We will actually show that
  \begin{equation}
    \label{eq:hoelder-eq}
    |z_0-z_1| \le d_\rho(z_0,z_1) \le C |z_0-z_1|^{1-\alpha}
  \end{equation}
  for all $z_0, z_1 \in \C$ with $|z_0-z_1| < 1$. The lower estimate
  follows directly from $\rho(z) \ge 1$, so let us prove the upper
  estimate. For convenience, we will write $\delta(z) =
  \dist(z,\partial G)$. In this proof, $C_1, C_2, \ldots$ will denote
  positive constants which depend only on $\alpha$ and the John
  constant $c$. The sketch in Figure~\ref{fig:john-arcs} should help
  illuminate the argument.

  Let us first compare Euclidean and $\rho$-length of John
  arcs. Assume that $\gamma:[0,L] \to \overline{G}$ is an arc length
  parametrization of a hyperbolic geodesic between $z_0 = \gamma(0)$
  and $z_1 = \gamma(L)$, where we allow $z_0$ or $z_1$ to be on the
  boundary of $G$. Let $\gamma_0 = \gamma|_{[0,L/2]}$ be the first
  half of this geodesic segment and $\gamma_1 = \gamma|_{[L/2,L]}$ the
  second half. Then
  \begin{equation*}
    l_\rho(\gamma_0)-\frac{L}{2}
     = \int_{\gamma_0} \frac{|dz|}{\delta(z)^\alpha}
    = \int_0^{L/2} \frac{dt}{\delta(\gamma(t))}
    \le \int_0^{L/2} \frac{dt}{(ct)^\alpha}
    = \frac{L^{1-\alpha}}{(1-\alpha) 2^\alpha c^\alpha},
  \end{equation*}
  and the same estimate holds for $l_\rho(\gamma_1)$, so that
  \begin{equation}
    \label{eq:hoelder-length}
    l_\rho(\gamma) \le l(\gamma) + C_1 l(\gamma)^{1-\alpha}.
  \end{equation}
  This estimate is already sufficient to show that $d_\rho$ is
  H\"older equivalent to the internal Euclidean metric of
  $G$. However, two points $z_0,z_1 \in G$ with $|z_0-z_1|$ small can
  have a large internal Euclidean distance. (As an example, if
  $J=[-1,1]$, and $z_0=\epsilon i$, $z_1 = -\epsilon i$, we have
  $|z_0-z_1|=2\epsilon$, whereas the internal distance in $G = \C
  \setminus [-1,1]$ is always $\ge 2$.)
  
  \begin{figure}[t]
    \centering
    \includegraphics[width=.8\textwidth]{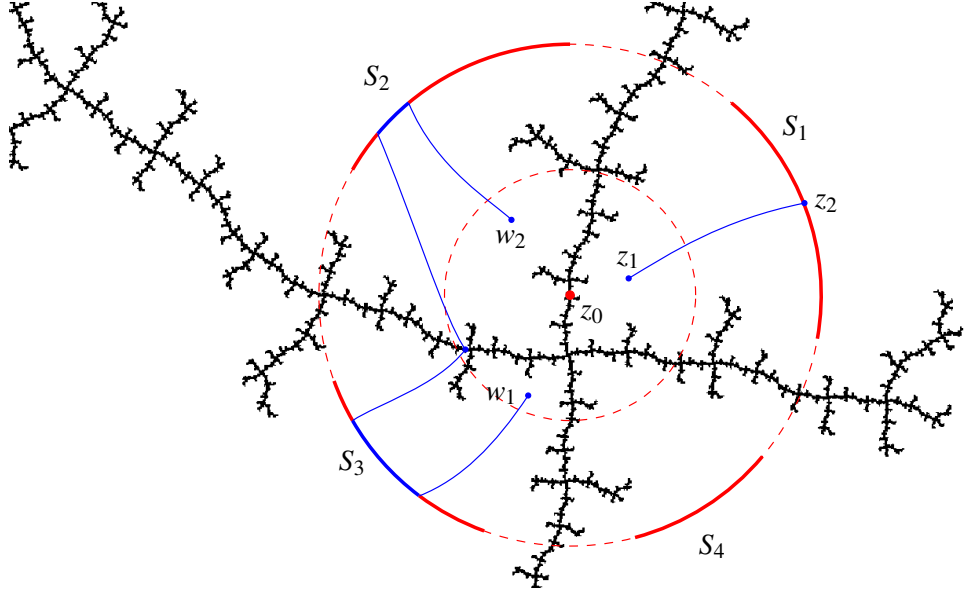}
    \caption[Sketch of construction]{Sketch of the construction in the
      proof for an example of a Gehring tree, part of the
      subhyperbolic Julia set of $f(z)=z^2+c$ with $c \approx
      0.419643+0.606291i$ (produced with Curt McMullen's software from
      \protect\url{http://math.harvard.edu/~ctm/programs/index.html}).
      Dashed circles are $S(z_0,r)$ and $S(z_0,2r)$, solid circular
      arcs are $S_k \subseteq S(z_0,2r)$. Part of an external ray is
      drawn from $z_1$ to $z_2$. All external rays from
      $\bar{B}(z_0,r)$ exit $B(z_0,2r)$ through one of the arcs
      $S_k$. The number of arcs, the $\rho$-length of the arcs and the
      parts of external rays contained in $|z-z_0| \le 2r$ are all
      controlled by the John constant and $r$. The path between $w_1$
      and $w_2$ illustrates how any two points in the $r$-neighborhood
      can be connected in $\bar{B}(z_0,2r)$ by a controlled number of
      external rays and subarcs of $S_k$, each of controlled length,
      connecting through points in $B(z_0,r) \cap J$.}
    \label{fig:john-arcs}
  \end{figure}
  
  As a next step we will find an upper bound for the $\rho$-diameter
  of Euclidean disks centered at points in $J$.  Fix $z_0 \in J$ and
  $r \in (0,1)$. Consider an arbitrary $z_1 \in \bar{B}(z_0,r)$
  and the external ray $\gamma$ from $z_1$ up to its first
  intersection with $S(z_0,2r)$ at some point $z_2$. (In the case
  where $z_1 \in \partial G$ is the landing point of more than one
  ray, let $\gamma$ be one of these rays.) Then $l(\gamma) \ge
  |z_2-z_1| \ge r$, so by the John property $B(z_2,cr) \subseteq
  G$. On the other hand, we know from the John property that
  $l(\gamma) \le \frac{2r}{c}$, since $\delta(z_2) \le 2r$, so by
  \eqref{eq:hoelder-length} we have that
  \begin{equation}
    \label{eq:ext-ray-length}
    l_\rho(\gamma) \le \frac{2r}{c}+ C_1
    \left(\frac{2r}{c}\right)^{1-\alpha} \le \frac{2r^{1-\alpha}}{c}+ C_1
    \left(\frac{2r}{c}\right)^{1-\alpha} = C_2 r^{1-\alpha}.
  \end{equation}
  The set
  \begin{equation*}
    K = \{ z \in S(z_0, 2r): \delta(z) \ge cr \}    
  \end{equation*}
  is compact, and so it can be covered by finitely many disks of
  radius $cr/2$ centered at points in $K$. The union of these disks
  has finitely many connected components $U_1, \ldots, U_N$, and their
  intersection with the circle $S(z_0, 2r)$ consists of finitely many
  disjoint circular arcs $S_1, \ldots, S_N$, each with an arc length
  of at least $cr$, so that $N \le 4\pi/c$, an upper bound depending 
  only on the John constant $c$. We also have that
  \begin{equation}
    \label{eq:circle-length}
    l_\rho(S_j) = l(S_j) + \int_{S_j} \frac{|dz|}{\delta(z)^\alpha} \le
    \left[1+\left(\frac{2}{cr}\right)^\alpha\right] l(S_j) \le
    \left[\frac{1}{r^\alpha}+\frac{2^\alpha}{c^\alpha
        r^{\alpha}}\right] 4\pi r = C_3 r^{1-\alpha}.
  \end{equation}
  Now define $B_j$ to be the set of points $z_1 \in \bar{B}(z_0,r)$
  for which an external ray\footnote{For $z_1 \in G$ there is a unique
    external ray, but we allow $z_1 \in \partial G$, in which case
    there might be several.} from $z_1$ intersects
  $\overline{U_j}$. Since every such ray intersects $K$, and $K
  \subset \bigcup_{j=1}^N U_j$, we have that $\bar{B}(z_0,r) =
  \bigcup_{j=1}^N B_j$. Any two points $z_1, z_2 \in B_j$ can be
  joined by a path $\gamma$ in $\bar{B}(z_0,2r)$ by joining the
  external rays from $z_1$ and $z_2$ to $S_j$ with an arc of $S_j$, of
  total $\rho$-length $l_\rho(\gamma) \le C_4r^{1-\alpha}$, by
  \eqref{eq:ext-ray-length} and \eqref{eq:circle-length}, so that
  $\diam_\rho B_j \le C_4 r^{1-\alpha}$. Since $\bar{B}(z_0,r) =
  \bigcup_{j=1}^N B_j$ is a connected union we have that $\diam_\rho
  \bar{B}(z_0,r) \le C_5 r^{1-\alpha}$ with $C_5 = NC_4$.

  Now fix any $z_0,z_1 \in \C$ with $r = |z_0-z_1| \in (0,1)$. Then one
  of the following is true.
  \begin{casesp}
  \item There exists $z_2 \in \bar{B}(z_0,2r) \cap J$.
    
    In this case, $\bar{B}(z_0,r) \subset \bar{B}(z_2, 3r)$, so
    $\diam_\rho B(z_0,r) \le \diam_\rho B(z_2,3r) \le C_6
    r^{1-\alpha}$.
  \item $\bar{B}(z_0,2r) \cap J = \emptyset$.

    In this case $\delta(z) \ge r$ for every $z \in \bar{B}(z_0,r)$,
    so if $\gamma \subset \bar{B}(z_0,r)$ is a straight line segment,
    then $l_\rho(\gamma) \le l(\gamma)(1+ r^{-\alpha}) \le
    4r^{1-\alpha}$, so $\diam_\rho B(z_0,r) \le 4 r^{1-\alpha}$.
  \end{casesp}
  These cases combined show in particular that $d_\rho(z_0,z_1) \le C
  |z_0-z_1|^\alpha$ with $C = \max \{C_6,4\}$, proving
  \eqref{eq:hoelder-eq} and finishing the proof.
\end{proof}

\section{Semihyperbolicity}
\label{sec:semihyp}
Recall from the introduction that a polynomial $f$ is
\emph{semihyperbolic} if each critical point in the Fatou set of $f$
is attracted by an attracting periodic cycle, and each critical point
in the Julia set of $f$ is non-recurrent. Following ideas of Ma\~n\'e
from \cite{Mane1993}, Carleson, Jones, and Yoccoz in
\cite{CarlesonJonesYoccoz1994} showed the following.
\begin{theorem}[Carleson, Jones, Yoccoz]
  \label{thm:CJY}
  Let $f$ be a polynomial of degree $\ge 2$. Then the following are
  equivalent:
  \begin{enumerate}
  \item $f$ is semihyperbolic.
  \item There exists $\epsilon > 0$, $C>0$, $\theta \in (0,1)$, and $D
    \in \N$ such that for all $x \in J(f)$, for all $n \in \N$, and for
    all components $U$ of $f^{-n}(B(x,\epsilon))$, we have that $\diam U
    \le C \theta^n$, and that $\deg(f^n, U) \le D$.
  \item $A_\infty(f)$ is a John domain.
  \item $A_\infty(f)$ is a John domain and every bounded Fatou domain is
    a quasidisk.
  \end{enumerate}
\end{theorem}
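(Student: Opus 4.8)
The plan is to prove the cycle of implications $(1)\Rightarrow(2)\Rightarrow(3)\Rightarrow(1)$ together with the easy equivalence $(3)\Leftrightarrow(4)$. I would open with two remarks about a semihyperbolic polynomial $f$ of degree $d=\deg f$ which are needed to run Ma\~n\'e's argument: $f$ has no parabolic cycle (a parabolic cycle attracts a critical point, which then sits in the Fatou set in no attracting basin, against semihyperbolicity), and $f$ has no recurrent critical point at all (a recurrent critical point must lie in $J(f)$, where semihyperbolicity forbids recurrence). I also use throughout that $\partial A_\infty(f)=J(f)$, that $\infty$ is a superattracting fixed point, so the B\"ottcher coordinate $\phi\colon\Delta\to A_\infty(f)$ and the Green's function $G=G_f$ with $G\circ f=d\,G$ are available, and that external rays are hyperbolic geodesics of $A_\infty(f)$. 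It will be a byproduct of $(1)\Rightarrow(2)$ that there are no rotation domains either, so that the only Fatou components are $A_\infty(f)$ and eventual preimages of attracting basins.

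The implication $(1)\Rightarrow(2)$ is Ma\~n\'e's shrinking lemma, and its proof is the step I expect to be the main obstacle. The delicate point is the uniform degree bound: one must produce $D$ so that for every backward branch $W_0=B(x,\epsilon)$, $f(W_j)=W_{j-1}$, the number of indices $j$ with $W_j\cap\Crit(f)\ne\emptyset$ is bounded independently of $x$ and $n$. Using that $\Crit(f)\cap J(f)$ is finite and non-recurrent, fix $\eta>0$ with $f^k(c)\notin B(c,\eta)$ for all such $c$ and all $k\ge1$; if a branch acquired critical points at times $j_1<j_2$, then $f^{\,j_2-j_1}$ of the later critical point lies within $\diam W_{j_1}$ of the earlier one, so once the sets $W_j$ are known to be uniformly small this contradicts the choice of $\eta$, and finiteness of $\Crit(f)$ then caps the count. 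To break the circularity I would follow Ma\~n\'e: first establish only qualitative uniform shrinking $\diam W_n\to0$ by Ma\~n\'e's normal-families argument, which uses only the absence of parabolic cycles and of recurrent critical points (a non-shrinking subsequence of branches, in the limit, produces an open set carried into arbitrarily small balls by arbitrarily high iterates of $f$, forcing a rotation domain or a critical-orbit recurrence); then deduce the degree bound $D$ from the near-return argument above; and finally upgrade to uniform exponential shrinking $\diam U\le C\theta^n$ using Koebe distortion (bounded degree makes inverse branches quasiconformal after a branched cover of bounded degree), compactness of $J(f)$, and a telescoping estimate for $-\log\diam$.

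For $(2)\Rightarrow(3)$ I would transport condition (2) through $\phi$ and $G$. Since $G\circ f=d\,G$, the $f^{-n}$-preimage of a fixed collar $\{\,r_0<G<d\,r_0\,\}$ around $J(f)$ is the shell $\{\,d^{-n}r_0<G<d^{-n+1}r_0\,\}$, and (2), applied after covering a collar component by boundedly many balls, gives that every component of the shell has diameter $\lesssim\theta^n$ and carries $f^n$ with degree $\le D$; Koebe distortion then gives $\dist(z,J(f))\asymp$ the diameter of the shell-piece containing $z$, uniformly in $n$. To check the John condition for $z_0,z_1\in A_\infty(f)$, run up their external rays --- which are hyperbolic geodesics --- to a common level a bounded amount above $\max(G(z_0),G(z_1))$ and above the finitely many critical values of $G$ (so the level set there is a single Jordan curve), and join across a subarc of it; the part of the resulting path from $z_0$ up to level $\asymp d^{-n}$ has Euclidean length $\lesssim\sum_{m\ge n}\theta^m\asymp\theta^n$, while each of its points is at distance $\gtrsim\theta^n$ from $J(f)$ by comparing a bounded number of nested shells, which is precisely the John inequality with a constant depending only on $C$, $\theta$, $D$, $d$.

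For $(3)\Rightarrow(1)$ I argue by contraposition: if $f$ is not semihyperbolic it has a critical point in a parabolic basin, or in a Siegel disk, or a recurrent critical point in $J(f)$. In the parabolic case, near a parabolic periodic point $p\in J(f)$ the Julia set approaches $p$ tangentially along the repelling petal directions, so $A_\infty(f)$ has an inward-pointing cusp at $p$; taking $w_n\to p$ deep inside such a cusp, any arc in $A_\infty(f)$ from $w_n$ to a fixed base point must leave the cusp, so $\dist(z,\partial A_\infty(f))$ becomes much smaller than the length of the subarc to $w_n$, violating the John inequality for every fixed constant. In the Siegel case, the boundary of the Siegel disk lies in $J(f)=\partial A_\infty(f)$ and the irrational rotation on it obstructs $A_\infty(f)$ from being fat along it, by a comparable argument. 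In the recurrent case, the infinitely many near-returns of the orbit of $c$ to $c$, together with the local degree $\ge2$ of $f$ at $c$, make pull-backs of small disks through $c$ fail the bounded-degree shrinking of $(1)\Rightarrow(2)$, and reversing that argument exhibits points of $A_\infty(f)$ where the John inequality fails. Finally $(4)\Rightarrow(3)$ is trivial; for $(3)\Rightarrow(4)$, $f$ is then semihyperbolic, so each bounded Fatou component eventually maps onto a periodic attracting basin $W$ whose boundary, like $\partial A_\infty(f)$, avoids recurrent critical points, and the bounded-degree pull-back estimates near $\partial W$ show that $W$ and $\CC\setminus\overline{W}$ are both John domains, hence $\partial W$ is a quasicircle and $W$ a quasidisk, a property preserved under the bounded-degree polynomial preimages.
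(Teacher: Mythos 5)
The paper does not prove this theorem at all: it is quoted verbatim from Carleson--Jones--Yoccoz \cite{CarlesonJonesYoccoz1994} (with Ma\~n\'e's \cite{Mane1993} as the other ingredient), so there is no in-paper argument to compare against. Judged on its own, your sketch gets the architecture of $(1)\Rightarrow(2)$ and $(2)\Rightarrow(3)$ roughly right (Ma\~n\'e's qualitative shrinking, the non-recurrence/near-return bound on the number of critical passages, then Koebe to get exponential shrinking; Green's function shells and external rays for the John property), but the direction $(3)\Rightarrow(1)$ contains genuine gaps. First, the Siegel-disk case: there is no ``comparable argument'' to the parabolic cusp argument. A Siegel boundary can be a quasicircle (Petersen, bounded-type rotation numbers), so no direct local geometric obstruction in $A_\infty(f)$ presents itself; the standard route --- and the one this paper itself uses in the proof of its main theorem --- is Ma\~n\'e's result that a Siegel boundary is contained in the $\omega$-limit set of a recurrent critical point, so that Siegel disks are excluded once recurrent critical points are. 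You cannot wave at a nonexistent cusp here.

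Second, and more seriously, your recurrent-critical-point case is circular. ``Pull-backs fail the bounded-degree shrinking of $(1)\Rightarrow(2)$, and reversing that argument exhibits points where the John inequality fails'' is exactly the contrapositive of the implication $(3)\Rightarrow(2)$ (John $\Rightarrow$ shrinking with bounded degree), which is one of the substantive directions of the theorem and is established nowhere in your cycle $(1)\Rightarrow(2)\Rightarrow(3)\Rightarrow(1)$; you only prove $(2)\Rightarrow(3)$, not its converse. Moreover, recurrence of a critical point makes the \emph{degree} of pull-backs through it unbounded, but it does not by itself prevent diameters from shrinking, so even the claim that $(2)$ fails needs an argument. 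Finally, $(3)\Rightarrow(4)$ is far from trivial: you would need bounded Fatou components to be Jordan domains, their exteriors to be John, and a (correctly stated) two-sided John characterization of quasicircles; in \cite{CarlesonJonesYoccoz1994} this is a separate theorem with real content. As written, the proposal establishes at best $(1)\Rightarrow(2)\Rightarrow(3)$ and the parabolic obstruction, not the full equivalence.
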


Let us say that a
\emph{backward orbit} of a domain $U \subset \CC$ is a sequence of
domains $(U_n)$ such that $U_0 = U$, and $U_{n}$ is a connected
component of $f^{-1}(U_{n-1})$ for $n \ge 1$. In particular this
implies that the restriction $f|_{U_n}:U_{n} \to U_{n-1}$ is a
branched covering for every $n \ge 1$. The sequence $(d_n)$ defined by
$d_n = \deg f|_{U_n}$ for $n \ge 1$ is the \emph{sequence of degrees}
of this backward orbit. Notice that $f^n|_{U_n}:U_n \to U$ is a
branched covering of degree $\prod_{k=1}^n d_k$. We say that a
backward orbit $(U_n)$ is \emph{univalent} if $d_n=1$ for all $n$.

The following theorem is a slightly refined version of Theorem~2.1 in
\cite{CarlesonJonesYoccoz1994}.
\begin{theorem}
  \label{thm:CJY-refined}
  Let $f$ be a polynomial of degree $d \ge 2$ without parabolic
  periodic points and without recurrent critical points. Let $c_1,
  \ldots c_r$ be the distinct critical points of $f$, of
  multiplicities $m_1, \ldots, m_r$, and let $\epsilon_0 > 0$ be
  arbitrary. Then there exists $\epsilon > 0$, $C_0>0$, and $\theta \in
  (0,1)$ such that the following holds: For all $z \in J(f)$, and for
  any backward orbit $(U_n)$ of $U = B(z,\epsilon)$ we have
  \begin{enumerate}
  \item $\diam U_n \le \min\{C_0 \theta^n,\epsilon_0\}$.
  \item Each $U_n$ contains at most one critical point, and each
    critical point $c_j$ is contained in at most one $U_n$.
  \item Each $U_n$ is simply connected, and the sequence of degrees
    $(d_n)$ satisfies $d_n = 1$ if $U_n$ does not contain a critical
    point, and $d_n = m_j + 1$ if $c_j \in U_n$.
  \item The degree of $f^n|_{U_n}:U_n \to U$ is bounded by $D =
    \prod_{j=1}^r (m_j+1) \le 2^{d-1}$.
  \end{enumerate}
\end{theorem}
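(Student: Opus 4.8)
The plan is to bootstrap the statement from part~(2) of Theorem~\ref{thm:CJY}. Under the hypotheses here --- no parabolic periodic points and no recurrent critical points --- the polynomial $f$ is in fact semihyperbolic: it has no parabolic basins, no Herman rings, and no Siegel disks (the boundary of a Siegel disk lies in the $\omega$-limit set of some critical point, which would then be recurrent), so every critical point in the Fatou set is attracted to an attracting cycle, possibly the superattracting fixed point at $\infty$. Thus Theorem~\ref{thm:CJY}(2) provides $\epsilon_1>0$, $C_1>0$, $\theta\in(0,1)$, and $D_1\in\N$ such that every component $V$ of $f^{-n}(B(x,\epsilon_1))$ with $x\in J(f)$ satisfies $\diam V\le C_1\theta^n$ and $\deg(f^n|_V)\le D_1$. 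For any backward orbit $(U_n)$ of $B(z,\epsilon)$ with $z\in J(f)$ and $\epsilon\le\epsilon_1$, each $U_n$ lies inside such a component, so $\diam U_n\le C_1\theta^n$ and $\deg(f^n|_{U_n})\le D_1$ for free; this already gives the exponential decay in~(1).

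The decisive preparatory step uses non-recurrence. For each critical point $c_j$ pick a round disk $W_j=B(c_j,2\eta_j)$ with $f^k(c_j)\notin W_j$ for all $k\ge1$, which is possible since $c_j$ is not periodic (being non-recurrent) and $c_j\notin\overline{\{f^k(c_j):k\ge1\}}$. After shrinking I may assume the $W_j$ are pairwise disjoint and that $\eta:=\min_j\eta_j$ is smaller than $\epsilon_0$ and than $\min_{i\ne j}|c_i-c_j|$. I then claim $\epsilon\in(0,\epsilon_1]$ can be chosen so that $\diam U_n<\eta$ for \emph{all} $n\ge0$ and every backward orbit of every $B(z,\epsilon)$, $z\in J(f)$. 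For $n\ge N$, where $C_1\theta^N<\eta$, this is automatic; for $0\le n<N$ it follows from the elementary fact that for each fixed $m$ the components of $f^{-m}(\bar B(z,\epsilon))$ have diameters tending to $0$ uniformly in $z\in\CC$ as $\epsilon\to0$. One proves this by induction on $m$: for $m=1$, once $\epsilon$ is small $f^{-1}(B(z_0,\epsilon))$ lies in an arbitrarily small neighborhood of the finite fibre $f^{-1}(z_0)$, so its components are small, and a covering argument over the compact space $\CC$ makes the bound uniform in $z_0$; the passage from $m$ to $m+1$ uses $f^{-(m+1)}=f^{-1}\circ f^{-m}$. With such an $\epsilon$ one gets $\diam U_n<\eta\le\epsilon_0$ for all $n$, which finishes~(1).

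The rest is bookkeeping under the standing bound $\diam U_n<\eta$. For~(2): no $U_n$ can contain two distinct critical points, since $\eta<\min_{i\ne j}|c_i-c_j|$; and if $c_j\in U_n\cap U_m$ with $n<m$, then $U_n\subseteq B(c_j,\eta_j)\subseteq W_j$ while $f^{m-n}(c_j)\in f^{m-n}(U_m)=U_n\subseteq W_j$ with $m-n\ge1$, contradicting the choice of $W_j$. For~(3), induct on $n$ with base case $U_0=B(z,\epsilon)$: the restriction $f|_{U_n}:U_n\to U_{n-1}$ is a proper holomorphic map onto a disk ($\chi(U_{n-1})=1$ by induction), so Riemann--Hurwitz gives $\chi(U_n)=d_n-\nu_n$, where $d_n=\deg(f|_{U_n})$ and $\nu_n$ is the total ramification; by~(2) the only possible critical point of $f|_{U_n}$ has local degree at most $d_n$, so $\nu_n\le d_n-1$, while $U_n$ is a bounded --- hence proper --- subdomain of $\CC$ and thus $\chi(U_n)\le1$; therefore $\chi(U_n)=1$, i.e.\ $U_n$ is simply connected, and $d_n=\nu_n+1$, which equals $1$ if $U_n$ contains no critical point and $m_j+1$ if $c_j\in U_n$. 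For~(4): degrees of branched covers multiply, so $\deg(f^n|_{U_n})=\prod_{k=1}^n d_k$; by~(2) each value $m_j+1$ occurs for at most one index $k$, so $\prod_{k=1}^n d_k\le\prod_{j=1}^r(m_j+1)=D$, and since $\sum_j m_j=d-1$ and $1+m\le2^m$ for $m\ge1$ we get $D\le2^{d-1}$.

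I expect the only real obstacle to be the uniform diameter control at small $n$: there the exponential estimate from Theorem~\ref{thm:CJY} is vacuous, and one must shrink $\epsilon$ and use compactness of $\CC$ (or of $J(f)$). The separation of critical points, the Riemann--Hurwitz step, and the degree count are then routine. A secondary point to handle carefully is the reduction of the hypotheses to semihyperbolicity; alternatively one may appeal directly to Ma\~n\'e's theorem, whose hypotheses are exactly the absence of parabolic periodic points and recurrent critical points.
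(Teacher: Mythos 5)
Your proof is correct, and its overall skeleton matches the paper's: isolate each critical point in a disk avoiding its forward orbit (legitimate by non-recurrence), force every $U_n$ into such a disk, deduce (2), and then get (3) and (4) from Riemann--Hurwitz and multiplicativity of degree. The one genuine difference is how you obtain the uniform bound $\diam U_n<\eta$ for \emph{all} $n$: the paper applies Ma\~n\'e's theorem directly (for each $x\in J(f)$ a neighborhood $U_x$ all of whose pullback components have diameter $<\epsilon_0$, then a Lebesgue-number argument), whereas you split into $n\ge N$, handled by the Carleson--Jones--Yoccoz exponential bound, and $n<N$, handled by an elementary finite-time compactness argument. Your route has the advantage of needing only the already-quoted Theorem~\ref{thm:CJY}(2) plus elementary facts, at the cost of the preliminary reduction ``no parabolic points and no recurrent critical points $\Rightarrow$ semihyperbolic,'' which itself secretly invokes Ma\~n\'e (to exclude Siegel disks); as you note, citing Ma\~n\'e's theorem directly, as the paper does, avoids that detour since its hypotheses are exactly the ones given. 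Two further points in your favor: your Riemann--Hurwitz computation $\chi(U_n)=d_n-\nu_n$ with $\chi(U_n)\le 1$ actually proves both simple connectivity and the identification $d_n=m_j+1$ in one stroke, where the paper merely asserts that polynomial preimages of simply connected domains are simply connected; and you treat all critical points uniformly rather than only those in $J(f)$, which is the cleaner bookkeeping since the domains $U_n$ may well contain Fatou critical points.
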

\begin{proof}
  Essentially all these properties directly follow from the proof in
  \cite{CarlesonJonesYoccoz1994}, but we will show how all of these
  follow from their stated results as well as the results from
  \cite{Mane1993}. Property (1) combines results from these two
  papers with a compactness argument.  Properties (3) and (4) are
  immediate consequences of (2), which is the main point of this
  refinement. We may assume by possibly choosing $\epsilon_0$ smaller
  that for every critical point $c \in J(f)$ the
  $2\epsilon_0$-neighborhood $B(c,2\epsilon_0)$ does not contain any
  other critical point, and no point of the forward orbit of
  $c$. Since $f$ has no parabolic periodic points and no recurrent
  critical points, by \cite[Theorem~II~(a)]{Mane1993}, for every $z
  \in J(f)$ there exists a neighborhood $U_z$ of $z$, so that every
  connected component of $f^{-n}(U_z)$ has diameter $ <
  \epsilon_0$. Then $\{U_z\}_{z \in J(f)}$ is an open cover of the
  compact Julia set of $f$, so it has a finite subcover
  $\{U_{z_k}\}_{k=1}^m$, with some Lebesgue number $\epsilon > 0$,
  i.e., such that for any $z_0 \in J(f)$, the $\epsilon$-neighborhood
  $B(z_0,\epsilon)$ is contained in $U_{z_k}$ for some $1 \le k \le
  m$.

  Now let $z \in J(f)$ be arbitrary, and let $(U_n)$ be a backward
  orbit of $U = B(z,\epsilon)$. Then $\diam U_n < \epsilon_0$ for all
  $n \ge 0$, and so by our assumption on $\epsilon_0$ each $U_n$ can
  contain at most one critical point. If some critical point $c$ is
  contained in $U_n$ and $U_{n+j}$ with $n,j \in \N$, $j \ge 1$, then
  $U_{n+j}$ contains both $c$ and $f^j(c)$, contradicting our
  assumption on $\epsilon_0$. Combined this shows (2).

  The fact that there exist constants $C_0>0$ and $\theta \in (0,1)$
  (independent of $z$) with $\diam U_n \le C_0 \theta^n$ for all
  $n \ge 0$ follows from
  \cite[Theorem~2.1]{CarlesonJonesYoccoz1994}, where we might have to
  allow for making $\epsilon$ smaller.

  Preimages of simply connected domains under polynomials are simply
  connected, so all $U_n$ are simply connected. The degree of
  compositions of branched covers is the product of the degrees, and
  since each critical point appears at most once in the sequence
  $(U_n)$, property (4) immediately follows.
\end{proof}

\section{Unicritical polynomials}
\label{sec:unicritical}
Up to conjugation, unicritical polynomials are polynomials of the form
$f(z) = z^d + c$ with $d \ge 2$, and critical point $0$ of
multiplicity $d-1$. Such a polynomial is semihyperbolic iff
$0 \notin \omega(0)$.  The main technical result in this section is
Theorem~\ref{thm:expansion}, which shows that a semihyperbolic
polynomial $f(z)=z^d+c$ is expanding with respect to the singular
metric $\frac{|dz|}{\dist(z,P(f))^{1-1/d}}$. Here is a rough sketch of
the proof strategy: In the unicritical case, the statement of
Theorem~\ref{thm:CJY-refined} takes a particularly simple form. Every
backward orbit $(U_n)$ of a small disk $U$ is either univalent, or
there is exactly one $n_0$ for which $d_{n_0} = d$, and $d_n=1$ for
all $n \ne n_0$.  In the case where $f^n$ does not have a critical
point in $U_n$, it is a conformal map from $U_n$ to $U$, and thus a
hyperbolic isometry. If $f^n$ has a critical point in $U_n$, then
there is only one, and $f_n$ is a branched cover from $U_n$ to $U$ of
degree $d$, with exactly one branch point $w_n$ of local degree
$d$. Equivalently, $f^n$ is an orbifold covering map from $U_n$ to
$\cO$, where $\cO$ is the orbifold with underlying domain $U$ and a
single cone point of order $d$ at the critical value $w_0 =
f^n(w_n)$. Combining the exponential shrinking of the diameters of
$U_n$ from Theorem~\ref{thm:CJY-refined} with Koebe distortion
estimates and the comparison of orbifold and hyperbolic metrics from
Section~\ref{sec:distortion-estimates}, we obtain the desired metric
distortion estimates for $f$ with respect to the metric $\rho(z)|dz|$.
\begin{theorem}
  \label{thm:expansion}
  Assume that the polynomial $f(z) = z^d+c$ is semihyperbolic but not
  hyperbolic. Then $f$ is expanding with respect to the metric
  $\sigma(z)|dz|$, where
  $\sigma(z) = \frac{1}{\dist(z,P(f))^{1-1/d}}$. Explicitly, there
  exist $C>0$ and $\lambda > 1$ such that
  \begin{equation}
    \label{eq:sigma-expansion}
    |(f^n)'(z)| \sigma(f^n(z)) \ge C \lambda^n \sigma(z)
  \end{equation}
  for all $z \in J(f) \setminus P(f)$ and all $n \ge 1$.
\end{theorem}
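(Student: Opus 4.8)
The plan is to fix $z \in J(f) \setminus P(f)$ and an integer $n \ge 1$, take $\epsilon > 0$ from Theorem~\ref{thm:CJY-refined} applied to $f$ (legitimate, since semihyperbolic but not hyperbolic means no parabolic cycles and no recurrent critical points, and the critical point $0$ lies in $J(f)$), set $w_0 = f^n(z)$, $U = B(w_0,\epsilon)$, and let $(U_k)_{k=0}^n$ be the backward orbit of $U$ ending at the component $U_n \ni z$. By the unicritical specialization of Theorem~\ref{thm:CJY-refined}, either $f^n|_{U_n}$ is univalent, or there is a unique index $n_0$ with $0 \in U_{n_0}$, so that $f^n|_{U_n} : U_n \to U$ is a proper branched cover of degree $d$ with a single branch point of local degree $d$ at the preimage of the critical value $w_* = f^{n-n_0}(c) \in P(f) \cap U$. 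In both cases $\diam U_n \le C_0 \theta^n$.

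First I would handle the univalent case. Here $g = (f^n|_{U_n})^{-1} : U \to U_n$ is conformal with $g(w_0) = z$. The key point is that both $z$ and $w_0$ lie outside $P(f)$, and $P(f)$ is forward invariant, so $U$ and $U_n$ stay at definite distance from $P(f)$ in the relevant scale; combining Koebe distortion on $g$ with the trivial comparison $\dist(z,P(f)) \le \dist(w_0,P(f)) + \diam(\text{image})$ and $|g'(w_0)| = 1/|(f^n)'(z)|$, one extracts
\[
  \frac{|(f^n)'(z)|\,\sigma(w_0)}{\sigma(z)}
  = |(f^n)'(z)| \left(\frac{\dist(z,P(f))}{\dist(w_0,P(f))}\right)^{1-1/d}.
\]
Since $f^n|_{U_n}$ contracts $U$ onto a set of diameter $\le C_0\theta^n$, Koebe's theorem forces $|(f^n)'(z)| \ge c_1 \epsilon / (C_0\theta^n) = c_2 \theta^{-n}$, and the ratio of distances to $P(f)$ is bounded below by a constant depending only on $\epsilon$ and $C_0$ (using that $z \in U_n$ and $w_0 \in U$, and $\dist(z,P(f)) \ge$ some fixed proportion of $\dist(w_0,P(f))$ up to the additive diameter term). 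This gives \eqref{eq:sigma-expansion} with $\lambda = \theta^{-1}$ in the univalent case.

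The branched case is where the real work lies, and I expect it to be the main obstacle. Write $f^n|_{U_n} = (f^{n-n_0}|_{U_{n_0}}) \circ (f^{n_0}|_{U_{n_0}})$, where $f^{n_0}|_{U_{n_0}} : U_{n_0} \to U$ and $f^{n-n_0}|_{U_{n-n_0}}$ — wait, rather: $f^{n_0}$ maps $U_n$ down to $U_{n-n_0}$ univalently (no critical point on the way from level $n$ down to level $n-n_0+1$? one must reorganize so the single branch point is isolated), and then one $d$-to-$1$ branched cover, then a univalent tail; the clean way is to observe that $f^n|_{U_n} : U_n \to U$ is an orbifold covering onto the orbifold $\cO$ with underlying domain $U = B(w_0,\epsilon)$ and one cone point of order $d$ at $w_* = f^{n-n_0}(c)$. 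Then $f^n$ is a local isometry from the hyperbolic metric of $U_n$ to $\delta_\cO$. Using \eqref{eq:conf-radius-hyp-metric}, $\diam U_n \le C_0\theta^n$, and the Koebe $1/4$-theorem to compare the hyperbolic density of $U_n$ at $z$ with $|(f^n)'(z)|$ times the hyperbolic density of $U_n$'s image, I would get $|(f^n)'(z)| \cdot \delta_\cO(w_0) \ge c_3 \theta^{-n} \cdot \delta_U(z')$ for the relevant point; the delicate part is converting $\delta_\cO(w_0)$ into $\sigma(w_0) = \dist(w_0,P(f))^{-(1/d-1)}$. Here Theorem~\ref{thm:metric-comparison} is exactly the tool: $\delta_\cO(w_0) = F_d(p_U(w_*,w_0)) \delta_U(w_0)$, and since $w_*$ is the cone point while $w_0$ may be anywhere in $U$, the estimate $F_d(t) t^{1-1/d} \asymp 1$ gives $\delta_\cO(w_0) \asymp |w_0 - w_*|^{-(1-1/d)} \asymp \dist(w_0, P(f))^{-(1-1/d)} = \sigma(w_0)$, using $w_* \in P(f)$ and (for the other direction) that $w_0 \in U$ stays within $\epsilon$ of $P(f)$ so the nearest postcritical point to $w_0$ is within a bounded factor of $|w_0 - w_*|$ in the relevant range — this last comparison, matching $\dist(w_0,P(f))$ against the distance to the \emph{particular} postcritical point $w_*$ that happens to sit at the cone point, is the subtle geometric input, and is handled by noting that $\diam U_n$ is exponentially small so $U$ is a small disk and any other postcritical point inside it would have to be comparably far. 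Finally, at the source point $z$, since $U_n$ has diameter $\le C_0\theta^n$ and contains $z$, we have $\delta_{U_n}(z) \ge c_4 / \diam U_n \ge c_4 \theta^{-n}/C_0$, while $\sigma(z) = \dist(z,P(f))^{-(1-1/d)}$ is compared to $\delta_{U_n}(z)$ via $\dist(z, P(f)) \gtrsim \dist(z, \C\setminus U_n) $ only when $z$ is deep inside $U_n$ — to avoid this I would instead note the cleaner inequality $\delta_{U_n}(z) \ge \frac{2}{\diam U_n} \ge c_5 \theta^{-n}$ and $\sigma(z) \le \dist(z, P(f))^{-(1-1/d)}$, and crucially $\dist(z,P(f)) \ge \dist(w_0, P(f)) - \diam U \ge$ (something not obviously positive) — so the right bookkeeping is to combine everything multiplicatively and let the $\theta^{-n}$ factor absorb any bounded loss: concretely, prove $|(f^n)'(z)| \ge c_6 \theta^{-n} \dist(w_0,P(f)) / \dist(z,P(f))$ is false in general, so instead carry $\sigma(z)$ through by the identity $\sigma(z)/\sigma(w_0) = (\dist(w_0,P(f))/\dist(z,P(f)))^{1-1/d}$ and bound this ratio using $\diam U_n \le C_0\theta^n$: if $\dist(z,P(f))$ is much larger than $\dist(w_0,P(f))$ we are fine since the ratio is small; if it is much smaller, then $\dist(z,P(f)) \le C_0\theta^n$ forces $z$ close to $P(f)$, but then $|(f^n)'(z)|$ blows up even faster by a refined Koebe estimate near the cone point — this dichotomy, pushing through both the smooth and branched regimes uniformly, is the crux and I would present it as two short cases. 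Collecting constants gives \eqref{eq:sigma-expansion} with $C$ depending only on $\epsilon, C_0, d$ and $\lambda = \theta^{-1} > 1$. Density is automatic since the estimate holds for all $z \in J(f)\setminus P(f)$, a dense subset of $J(f)$.
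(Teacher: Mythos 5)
Your overall architecture matches the paper's: backward orbits of $\epsilon$-disks from Theorem~\ref{thm:CJY-refined}, Koebe distortion in the univalent case, and the orbifold metric comparison of Theorem~\ref{thm:metric-comparison} in the branched case. But two steps as written would fail. First, in the univalent case your claim that the ratio $\dist(z,P(f))/\dist(f^n(z),P(f))$ is ``bounded below by a constant depending only on $\epsilon$ and $C_0$'' is false: if $P(f)$ meets $U_n$, then $z$ can lie within distance $\delta$ of a postcritical point $w_n\in U_n$ with $\delta$ arbitrarily small compared to $\dist(f^n(z),P(f))$, because $f^n$ expands $U_n$ by a factor $\approx r_0/r_n\approx\theta^{-n}$; the ratio can be as small as roughly $\theta^{n}$. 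The correct control is to apply Koebe distortion to the pair $(z,w_n)\mapsto(f^n(z),f^n(w_n))$ with $w_n$ the nearest postcritical point to $z$, and use $f^n(w_n)\in P(f)$ to bound $\sigma(f^n(z))$ from below; this costs a factor $|(f^n)'(z)|^{-(1-1/d)}$ and leaves only $|(f^n)'(z)|^{1/d}\gtrsim\theta^{-n/d}$, so the expansion rate is $\lambda=\theta^{-1/d}$, not $\theta^{-1}$ as you claim. (When $P(f)\cap U_n=\emptyset$ one instead uses the Koebe $1/4$-theorem to get $\dist(z,P(f))\ge r(U_n,z)/4$ together with $\min_{J(f)}\sigma>0$; you need both subcases.)

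Second, in the branched case the estimate you are missing is an upper bound on $\sigma(z)$ at the source point, and your closing ``dichotomy'' does not supply it. The fact that rescues this case is that $U_n\cap P(f)=\emptyset$ whenever the critical point appears along the backward orbit at some index $n_0\le n$: by Theorem~\ref{thm:CJY-refined}(2) the critical point occurs in at most one member of any backward orbit, so no $U_j$ with $j\ge n_0$ can contain a point of the critical orbit, hence none of $P(f)$. With that, Koebe $1/4$ gives $\dist(z,P(f))\ge r(U_n,z)/4$, hence $\sigma(z)\le(4/r(U_n,z))^{1-1/d}$, and the orbifold comparison --- where, contrary to your worry, only the easy inequality $F_d(t)\le t^{-(1-1/d)}$ together with $|w_0-w_*|\ge\dist(w_0,P(f))$ is needed, not the reverse comparison of $\dist(w_0,P(f))$ with the distance to the particular cone point --- closes the case with the same rate $\theta^{-n/d}$. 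Without the observation $U_n\cap P(f)=\emptyset$ the argument does not close, since $z$ could a priori be arbitrarily close to $P(f)$ relative to $\diam U_n$, making $\sigma(z)$ uncontrollably large.
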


\begin{proof}
  Let us first note that $P(f)$ is a forward-invariant compact proper
  subset of the Julia set $J(f)$, so it is in particular nowhere dense
  in $J(f)$. By establishing \eqref{eq:sigma-expansion} for all
  $z \in J(f) \setminus P(f)$, we thereby show that it holds on a
  dense subset of the Julia set, which establishes that $f$ is
  expanding with respect to $\sigma$.

  Fix constants $\epsilon>0$, $C_0>0$, and $\theta \in (0,1)$ from
  Theorem~\ref{thm:CJY-refined}. We define
  $\lambda := \theta^{-1/d} > 1$. In the course of this proof,
  $C_1, C_2, \ldots$ will denote positive constants which depend only
  on $\epsilon$, $C_0$, $\theta$, and $d$.

  Pick an arbitrary $z_0 \in J(f)$, and a backward orbit
  $(z_n)_{n=0}^\infty$ of $z_0$, i.e., a sequence satisfying
  $f(z_n) = z_{n-1}$. In order to prove the theorem, we will show that
  the inequality \eqref{eq:sigma-expansion} holds for all $z_n$, as
  long as $z_n \notin P(f)$, with constants only depending on
  $\epsilon$, $C_0$, $\theta$, and $d$. Let
  $U_0 = U = B(z_0, \epsilon)$, and let $U_n$ be the connected
  component of $f^{-n}(U)$ containing $z_n$.  Let $r_n = r(U_n,z_n)$
  be the conformal radius of $U_n$ with respect to $z_n$. By
  definition, Theorem~\ref{thm:CJY-refined}, and inequality
  \eqref{eq:conf-radius-diam} we have that
  \begin{equation}
    \label{eq:ExpShrink}
    r_0 = \epsilon \qquad \text{and} \qquad r_n \le \diam
    U_n \le  C_0 \theta^n
  \end{equation}
  for all $n \ge 1$. Now we fix some arbitrary $n$, assume that
  $z_n \notin P(f)$, and consider the following three cases separately
  \begin{enumerate}
  \item $f^n$ maps $U_n$ conformally onto $U$, and $P(f) \cap U_n =
    \emptyset$.
  \item $f^n$ maps $U_n$ conformally onto $U$, and $P(f) \cap U_n \ne 
    \emptyset$.
  \item $f^n$ has a critical point in $U_n$.
  \end{enumerate}
  In cases (1) and (2) we know from \eqref{eq:conf-radius} that
  \begin{equation}
    \label{eq:f-n-prime}
    |(f^n)'(z_n)| = \frac{r_0}{r_n}
  \end{equation}
  \begin{casesp}
  \item $f^n$ maps $U_n$ conformally onto $U$, and $P(f) \cap U_n =
    \emptyset$.

    In this case Koebe's 1/4-theorem shows that
    $\dist(z_n, P(f)) \ge \frac{r_n}{4}$, so that
    $\sigma(z_n) \le \left(\frac{4}{r_n}\right)^{1-1/d}$. We also
    observe that $m = \min\limits_{z \in J(f)} \sigma(z) > 0$, so
    $\sigma(z_0) \ge m$. Combining these estimates with
    \eqref{eq:f-n-prime}, we get that
    \begin{equation}
      \begin{split}
        \frac{|(f^n)'(z_n)| \sigma(z_0)}{\sigma(z_n)} & \ge m
        \frac{r_0}{r_n} \left(\frac{r_n}{4}\right)^{1-1/d} = \frac{m
          \epsilon}{2 \cdot
          4^{1-1/d}} r_n^{-1/d} \\
        & \ge \frac{m \epsilon}{2 \cdot 4^{1-1/d} C_0^{1/d}}
        \left(\theta^{-1/d}\right)^n = C_1 \lambda^n
      \end{split}
    \end{equation}
  \item $f^n$ maps $U_n$ conformally onto $U$, and $P(f) \cap U_n \ne
    \emptyset$.
    
    We are assuming $z_n \notin P(f)$, so there exists
    $w_n \in P(f) \cap U_n$ with $\dist(z_n, P(f)) = |z_n-w_n| > 0$,
    so that $\sigma(z_n) = \frac{1}{|z_n-w_n|^{1-1/d}}$. Then
    $w_0 = f^n(w_n) \in P(f)$, so
    $\sigma(z_0) \ge \frac{1}{|z_0 - w_0|^{1-1/d}}$, and
    \begin{equation}
      \frac{|(f^n)'(z_n)| \sigma(z_0)}{\sigma(z_n)} \ge
      \frac{r_0}{r_n} \left|\frac{z_n - w_n}{z_0-w_0}\right|^{1-1/d}.
    \end{equation}
    By Koebe's distortion theorem applied to the branch of $f^{-n}$
    mapping $U_0$ to $U_n$, and using \eqref{eq:f-n-prime}, we get that
    \begin{equation}
      \left|\frac{z_n - w_n}{z_0-w_0}\right| \ge
      \frac{r_n}{r_0\left(1+\frac{|z_0-w_0|}{r_0}\right)^2} \ge
      \frac{r_n}{4r_0}.
    \end{equation}
    Combining these inequalities, we have that
    \begin{equation}
      \begin{split}
        \frac{|(f^n)'(z_n)| \sigma(z_0)}{\sigma(z_n)} & \ge
        \frac{r_0}{r_n}\left(\frac{r_n}{4r_0}\right)^{1-1/d} =
        \frac{1}{4^{1-1/d}} \left(\frac{r_0}{r_n}\right)^{1/d} \\
        & \ge \frac{1}{4^{1-1/d}} \left( \frac{\epsilon}{2C_0 \theta^n}
        \right)^{1/d}
        \ge \frac{\epsilon^{1/d}}{4^{1-1/d} 2^{1/d} C_0^{1/d}}
        \left(\theta^{-1/d}\right)^n \\
        &
        = C_2 \lambda^n.
      \end{split}
    \end{equation}
  \item $f^n$ has a critical point in $U_n$.

    By Theorem~\ref{thm:CJY-refined}, there exists
    $n_0 \in \{1,\ldots,n\}$ such that $0 \in U_{n_0}$, whereas
    $0 \notin U_j$ for $j \in \{1,\ldots,n\} \setminus\{n_0\}$.
    Furthermore, the theorem guarantees that no backward orbit of
    $U_{n_0}$ contains the critical point $0$, so
    $U_j \cap P(f) =\emptyset$ for $j \ge n_0$, and in particular
    $U_n \cap P(f) = \emptyset$. Applying Koebe's 1/4-theorem then
    gives $\dist(z_n, P(f)) \ge \frac{r_n}{4}$. We also know that
    $w_0 \in U_0 \cap P(f)$, so $\dist(z_0, P(f)) \le |z_0-w_0|$. In
    terms of the metric density $\sigma$, these inequalities are
    \begin{equation}
      \label{eq:case3-ineq2}
      \sigma(z_n) \le \left(\frac{4}{r_n}\right)^{1-1/d} \quad
      \text{and} \quad
      \sigma(z_0) \ge \frac{1}{|z_0-w_0|^{1-1/d}}.
    \end{equation}

    As a further consequence, $f^n|_{U_n}$ is a composition of $n-1$
    conformal maps $f|_{U_j}:U_j \to U_{j-1}$ for $j \ne n_0$, and one
    branched covering $f|_{U_{n_0}}:U_{n_0} \to U_{n_0-1}$ with
    exactly one branch point at $z=0$. Let $w_n \in U_n$ be the unique
    point such that $f^{n-k}(w_n) = 0$, and let $w_0 = f^k(0)$. If we
    endow $U_0$ with an orbifold structure by placing a single cone
    point of index $d$ at $w_0$, denoting the resulting orbifold by
    $\cO_0$, then $f^n|_{U_n}:U_n \to \cO_0$ is an orbifold covering
    map, and thus it is a (local) isometry with respect to the
    corresponding orbifold metrics $\delta_{U_n}$ (which is just the
    hyperbolic metric of $U_n$) and $\delta_{\cO_0}$. Explicitly,
    \begin{equation}
      \label{eq:orbifold-isometry1}
      \delta_{U_n}(z) = \delta_{\cO_0}(f^n(z)) |(f^n)'(z)|
    \end{equation}
    for all $z \in U_n \setminus\{ w_n \}$. Applying this for $z=z_n$
    and using the relation \eqref{eq:conf-radius-hyp-metric} between
    conformal radius and the density of the hyperbolic metric, we get
    \begin{equation}
      \label{eq:orbifold-isometry2}
      \frac{2}{r_n} = \delta_{U_n}(z_n) = \delta_{\cO_0}(z_0) |(f^n)'(z_n)|.
    \end{equation}
    Now let $t = p_{U_0}(z_0, w_0) = \frac{|z_0-w_0|}{r_0}$ denote the
    pseudo-hyperbolic distance between $z_0$ and $w_0$ in
    $U_0 = B(z_0,r_0)$. Applying Theorem~\ref{thm:metric-comparison}
    to equation \eqref{eq:orbifold-isometry2} yields
    \begin{equation}
      \begin{split}
        \frac{2}{r_n} & = \delta_{U_0}(z_0) F_d(t) |(f^n)'(z_n)|
        \le \frac{2|(f^n)'(z_n)|}{r_0 \cdot t^{1-1/d}}  =
         \frac{2 |(f^n)'(z_n)|}{r_0^{1/d} \cdot |z_0-w_0|^{1-1/d}},
      \end{split}
    \end{equation}
    so that
    \begin{equation}
      \label{eq:case3-ineq1}
      |(f^n)'(z_n)| \ge \frac{r_0^{1/d} \cdot |z_0-w_0|^{1-1/d}}{r_n}.
    \end{equation}
    
    Combining \eqref{eq:case3-ineq1} and
    \eqref{eq:case3-ineq2}, we get that
    \begin{equation}
      \begin{split}
        \frac{|(f^n)'(z_n)| \sigma(z_0)}{\sigma(z_n)} & \ge
        \frac{r_0^{1/d}}{r_n} \left(\frac{r_n}{4}\right)^{1-1/d} =
        \frac{r_0^{1/d}}{4^{1-1/d} \cdot r_n^{1/d}} \\
        & \ge \frac{\epsilon^{1/d}}{4^{1-1/d} \cdot C_0^{1/d}}
        \left(\theta^{-1/d}\right)^n = C_3 \lambda^n,
      \end{split}
    \end{equation}
  \end{casesp}
  finishing the proof that $f$ is expanding on $J(f)$ with respect to
  $\sigma(z)|dz|$.
\end{proof}

\MainTheorem
\begin{proof}
  By Theorem~\ref{thm:CJY}, we know that $A_f(\infty)$ is a John
  domain. By definition, semihyperbolic polynomials do not have
  parabolic periodic points, and from \cite{Mane1993} we know that
  they do not have Siegel disks either. This shows that any bounded
  Fatou component for semihyperbolic polynomials is in the basin of
  some attracting or superattracting periodic cycle, and any such
  basin contains a critical point. In our situation we have only one
  critical point, and by the assumption that $f$ is not hyperbolic, it
  is not contained in an attracting or superattracting basin, which
  shows that $f$ has no bounded Fatou components, so that the Julia
  set $J(f)$ is a Gehring tree.  By Corollary~\ref{cor:john-metric},
  the metric $\rho(z)|dz|$ is then H\"older equivalent to the
  Euclidean metric on $\C$. By Theorem~\ref{thm:expansion}, $f$ is
  expanding with respect to $\sigma(z)|dz|$, where
  $\sigma(z) = \frac{1}{\dist(z,P(f))^{1-1/d}}$. Since the Julia set
  $J(f)$ is compact, there exists some $R>0$ such that
  $J(f) \subseteq B(0,r)$.  For $|z| \le R$ we have that
  $\sigma(z) \le \rho(z) \le (2R+1)\sigma(z)$, so that
  \begin{equation*}
    |(f^n)'(z)| \rho(f^n(z)) \ge |(f^n)'(z)| \sigma(f^n(z)) \ge C
    \lambda^n \sigma(z) \ge C' \lambda^n \rho(z)
  \end{equation*}
  with $C'=\frac{C}{2R+1}$, which shows that $f$ is expanding with
  respect to $\rho(z)|dz|$ as well.
\end{proof}

\bibliography{expanding}
\bibliographystyle{amsalpha}

\end{document}